\newcommand{\nc}{\newcommand}
\nc{\one}{\mbox{\bf 1}}
\nc{\invtensor}{\underset{\leftarrow}{\otimes}}
\nc{\const}{\operatorname{const}}
\nc{\ad}{\operatorname{ad}}
\nc{\tr}{\operatorname{tr}}
\nc{\tp}{\operatorname{top}}
\nc{\rank}{\operatorname{rank}}
\nc{\corank}{\operatorname{corank}}
\nc{\codim}{\operatorname{codim}}
\nc{\sdim}{\operatorname{sdim}}
\nc{\mult}{\operatorname{mult}}
\nc{\spn}{\operatorname{span}}
\nc{\Sym}{\operatorname{Sym}}
\nc{\sym}{\operatorname{sym}}
\nc{\id}{\operatorname{id}}
\nc{\Id}{\operatorname{Id}}
\nc{\Ree}{\operatorname{Re}}
\nc{\htt}{\operatorname{ht}}
\nc{\sch}{\operatorname{sch}}
\nc{\str}{\operatorname{str}}
\nc{\Ker}{\operatorname{Ker}}
\nc{\rker}{\operatorname{rKer}}
\nc{\im}{\operatorname{Im}}
\nc{\osp}{\mathfrak{osp}}
\nc{\sgn}{\operatorname{sgn}}
\nc{\F}{\operatorname{F}}
\nc{\Mod}{\operatorname{Mod}}
\nc{\Mat}{\operatorname{Mat}}
\nc{\Soc}{\operatorname{Soc}}
\nc{\Inj}{\operatorname{Inj}}
\nc{\Hom}{\operatorname{Hom}}
\nc{\End}{\operatorname{End}}
\nc{\supp}{\operatorname{supp}}
\nc{\Card}{\operatorname{Card}}
\nc{\Ann}{\operatorname{Ann}}
\nc{\Ind}{\operatorname{Ind}}
\nc{\Coind}{\operatorname{Coind}}
\nc{\wt}{\operatorname{wt}}
\nc{\ch}{\operatorname{ch}}
\nc{\Stab}{\operatorname{Stab}}
\nc{\Sch}{{\mathcal S}\mbox{\em ch}}
\nc{\Irr}{\operatorname{Irr}}
\nc{\Spec}{\operatorname{Spec}}
\nc{\Prim}{\operatorname{Prim}}
\nc{\Aut}{\operatorname{Aut}}
\nc{\Ext}{\operatorname{Ext}}
\nc{\Fract}{\operatorname{Fract}}
\nc{\gr}{\operatorname{gr}}
\nc{\deff}{\operatorname{def}}
\nc{\HC}{\operatorname{HC}}
\nc{\red}{\operatorname{red}}
\nc{\wdchi}{\widetilde{\chi}}
\nc{\wdH}{\widetilde{H}}
\nc{\wdN}{\widetilde{N}}
\nc{\wdM}{\widetilde{M}}
\nc{\wdO}{\widetilde{O}}
\nc{\wdR}{\widetilde{R}}
\nc{\wdS}{\widetilde{S}}
\nc{\wdV}{\widetilde{V}}
\nc{\wdC}{\widetilde{C}}
\nc{\Obj}{\operatorname{Obj}}
\nc{\Dglie}{\operatorname{{\mathcal D}glie}}
\nc{\Fin}{\operatorname{{\mathcal F}in}}
\nc{\Adm}{\operatorname{\mathcal{A}dm}}
\nc{\Sg}{{\cS(\fg)}}
\nc{\Shg}{{\cS(\fhg)}}
\nc{\Ug}{{\cU(\fg)}}
\nc{\Uhg}{{\cU(\fhg)}}
\nc{\Sh}{{\cS(\fh)}}
\nc{\Uh}{{\cU(\fh)}}
\nc{\Uhh}{{\cU(\fhh)}}
\nc{\Zg}{{{\mathcal{Z}}(\fg)}}
\nc{\Vir}{{\mathcal{V}ir}}
\nc{\NS}{{\mathcal{N}S}}
\nc{\tZg}{{\widetilde{\mathcal Z}({\mathfrak g})}}
\nc{\Zk}{{\mathcal Z}({\mathfrak k})}
\nc{\Up}{{\mathcal U}({\mathfrak p})}
\nc{\Ah}{{\mathcal A}({\mathfrak h})}
\nc{\Ag}{{\mathcal A}({\mathfrak g})}
\nc{\Ap}{{\mathcal A}({\mathfrak p})}
\nc{\Zp}{{\mathcal Z}({\mathfrak p})}
\nc{\cR}{\mathcal R}
\nc{\cS}{\mathcal S}
\nc{\cT}{\mathcal{T}}
\nc{\cY}{\mathcal Y}
\nc{\cA}{\mathcal A}
\nc{\cU}{\mathcal U}
\nc{\cH}{\mathcal H}
\nc{\cM}{\mathcal M}
\nc{\cL}{\mathcal L}
\nc{\cF}{\mathcal F}
\nc{\fg}{\mathfrak g}
\nc{\fo}{\mathfrak o}
\nc{\CO}{\mathcal O}
\nc{\CR}{\mathcal R}
\nc{\Cl}{\mathcal {C}\ell}
\nc{\cW}{\mathcal{W}}
\nc{\bM}{\mathbf{M}}
\nc{\bL}{\mathbf{L}}
\nc{\bN}{\mathbf{N}}
\nc{\zq}{\mathpzc q}
\nc{\fl}{\mathfrak l}
\nc{\fn}{\mathfrak n}
\nc{\fm}{\mathfrak m}
\nc{\fp}{\mathfrak p}
\nc{\fh}{\mathfrak h}
\nc{\ft}{\mathfrak t}
\nc{\fk}{\mathfrak k}
\nc{\fb}{\mathfrak b}
\nc{\fs}{\mathfrak s}
\nc{\psl}{\mathfrak{psl}}
\nc{\fB}{\mathfrak B}
\nc{\vareps}{\varepsilon}
\nc{\varesp}{\varepsilon}
\nc{\veps}{\varepsilon}
\nc{\fsl}{\mathfrak{sl}}
\nc{\fgl}{\mathfrak{gl}}
\nc{\fso}{\mathfrak{so}}
\nc{\fpq}{\mathfrak{pq}}
\nc{\fq}{\mathfrak q}
\nc{\fsq}{\mathfrak{sq}}
\nc{\fpsq}{\mathfrak{psq}}
\nc{\fhg}{\hat{\fg}}
\nc{\fhn}{\hat{\fn}}
\nc{\fhh}{\hat{\fh}}
\nc{\fhb}{\hat{\fb}}
\nc{\hrho}{\hat{\rho}}
\nc{\hsl}{\hat{\fsl}}
\nc{\fpo}{\mathfrak{po}}
\nc{\dirlim}{\underset{\rightarrow}{\lim}\,}
\nc{\nen}{\newenvironment}
\nc{\ol}{\overline}
\nc{\ul}{\underline}
\nc{\ra}{\rightarrow}
\nc{\lra}{\longrightarrow}
\nc{\Lra}{\Longrightarrow}
\nc{\bo}{\bar{1}}
\nc{\Lla}{\Longleftarrow}
\nc{\Llra}{\Longleftrightarrow}
\nc{\thla}{\twoheadleftarrow}
\nc{\lang}{(}
\nc{\rang}{)}
\nc{\hra}{\hookrightarrow}
\nc{\iso}{\overset{\sim}{\lra}}
\nc{\ssubset}{\underset{\not=}{\subset}}
\nc{\vac}{|0\rang}
\nc{\Thm}[1]{Theorem~\ref{#1}}
\nc{\Prop}[1]{Proposition~\ref{#1}}
\nc{\Lem}[1]{Lemma~\ref{#1}}
\nc{\Cor}[1]{Corollary~\ref{#1}}
\nc{\Conj}[1]{Conjecture~\ref{#1}}
\nc{\Claim}[1]{Claim~\ref{#1}}
\nc{\Defn}[1]{Definition~\ref{#1}}
\nc{\Exa}[1]{Example~\ref{#1}}
\nc{\Rem}[1]{Remark~\ref{#1}}
\nc{\Note}[1]{Note~\ref{#1}}
\nc{\Quest}[1]{Question~\ref{#1}}
\nc{\Hyp}[1]{Hypoth\`ese~\ref{#1}}
\begin{document}

\setcounter{section}{-1}
\setcounter{tocdepth}{1}

\title[Generalized Reflection Root Systems ]
{Generalized Reflection Root Systems}

\author[Maria Gorelik, Ary Shaviv]{Maria Gorelik$^\dag$, Ary Shaviv }

\address[]{Dept. of Mathematics, The Weizmann Institute of Science,
Rehovot 76100, Israel}
\email{maria.gorelik@weizmann.ac.il, ary.shaviv@weizmann.ac.il}
\thanks{$^\dag$ Supported in part by BSF Grant No. 711623.}

\maketitle

\begin{abstract}
We study a combinatorial object, which we call a
GRRS (generalized reflection root system);
the classical root systems and GRSs introduced by V.~Serganova
are examples of finite GRRSs. A GRRS is finite if it contains
a finite number of vectors and is called affine if it is infinite and has a finite minimal quotient. We prove that
an irreducible GRRS containing an isotropic root is either finite or affine; we describe all finite and affine GRRSs
and classify them in most of the cases.

\end{abstract}

\section{Introduction}
We study a combinatorial object, which we call a
GRRS (generalized reflection root system), see~\Defn{defnGRRS}.
The classical root systems are finite GRRSs without isotropic roots. Our definition of GRRS is motivated by Serganova's definition
of GRS introduced in~\cite{VGRS}, Sect. 1, and by
the following examples:
the set of real roots $\Delta_{re}$ of a symmetrizable
Kac-Moody superalgebra introduced in~\cite{S2} and its subsets  $\Delta_{re}(\lambda)$  ("integral real roots"),
see~\cite{GK}.

Each GRRS $R$ is, by definition,  a subset of a finite-dimensional complex
vector space $V$ endowed with a symmetric bilinear form $(-,-)$.
The image of $R$ in $V/Ker (-,-)$ is denoted by $cl(R)$; it satisfies weaker
properties than GRRS and is called a WGRS.
An infinite GRRS is called {\em affine} if its image
$cl(R)$ is finite (in this case $cl(R)$ is a finite WGRS, which
were classified in~\cite{VGRS}).
 We show that an irreducible GRRS
 containing an isotropic root is either finite or affine.
 Recall a theorem of C.~Hoyt that a symmetrizable
 Kac-Moody superalgebra with an isotropic simple root
 and an indecomposable Cartan matrix (this corresponds to
 the irreducibility of GRRS)
 is finite-dimensional or affine, see~\cite{H}.

Finite  GRRSs correspond to the root systems of finite-dimensional Kac-Moody superalgebras.
In this paper we describe all affine GRRSs $R$ and classify them
for most cases of $cl(R)$.  Irreducible affine
GRRSs
with $\dim Ker (-,-)=1$   correspond to symmetrizable affine Lie superalgebras. This case
 was treated in~\cite{Sh}; in particular,
it implies that an "irreducible subsystem" of the set of real roots of an affine Kac-Moody superalgebra is a set of real roots
of an affine or a finite-dimensional Kac-Moody superalgebra
(this  was used in~\cite{GK}).

For each GRRS we introduce a certain subgroup of $Aut R$, which is denoted
by $GW(R)$; if $R$ does not contain isotropic roots, then $GW(R)$ is the usual Weyl group.  Let $R$ be an irreducible
affine GRRS (i.e., $cl(R)$ is finite). We show
that if the action of
$GW(cl(R))$ to $cl(R)$ is transitive and $cl(R)\not=A_1$,
then $R$ is either the affinization of $cl(R)$
(see~\S~\ref{defaff} for definition) or, if $cl(R)$ is the root system
of $\mathfrak{psl}(n,n), n>2$, $R$ is
a certain ``bijective quotient'' of the affinization
of the root system of $\mathfrak{pgl}(n,n)$, see~\S~\ref{bijquo} for definition. The action of
$GW(cl(R))$ to $cl(R)$ is transitive if and only if
$cl(R)$ is the root system of a simply laced
Lie algebra or a Lie superalgebra $\fg\not=B(m,n)$, which is not a Lie algebra. If $R$ is such that $cl(R)=B(m,n), m,n\geq 1$ or
$cl(R)=B_n,C_n, n\geq 3$, then
 $R$ is classified by non-empty subsets of the affine
space $\mathbb{F}_2^k$ up to affine autormorphisms of $\mathbb{F}_2^k$, where $\dim Ker(-,-)=k$;
a similar classification holds for $cl(R)=A_1$.
In the  cases $cl(R)=G_2, F_4$, the GRRSs $R$
are parametrized by $s=0,1,\ldots,\dim Ker(-,-)$.
In the remaining case either $cl(R)$ is a finite WGRS, which is not a GRRS, or $cl(R)=BC_n$;  we partially classify
the corresponding GRRSs (we describe all possible $R$).

Another combinatorial object, an extended affine root supersystem (EARS),
was introduced and described in a recent paper of
M.~Yousofzadeh~\cite{You}.  The main differences between
a GRRS and an EARS are the following: EARS has a "string property"
(for each $\alpha,\beta$ in an EARS with $(\alpha,\alpha)\not=0$
the intersection of $\beta+\mathbb{Z}\alpha$
with the EARS is a string $\{\beta-j\alpha|\ j\in \{-p,p+1,\ldots,q\}\}$ for some $p,q\in\mathbb{Z}$ with
$p-q=2(\alpha,\beta)/(\alpha,\alpha)$), and  a GRRS
should be invariant with respect to the "reflections"
connected to its elements. The string property implies the
invariance with respect to the reflections connected to
non-isotropic roots ($\alpha$ such that $(\alpha,\alpha)\not=0$).
A finite GRRS corresponds to the root system
of a finite-dimensional Kac-Moody superalgebra, and the
finite EARSs include two additional series.
The root system of a symmetrizable affine
Lie superalgebra is an EARS and the set of real roots is a GRRS.
Moreover, the set of roots of a symmetrizable Kac-Moody superalgebra is an EARS only if this algebra is affine or finite-dimensional
(by contrast, the set of real roots is always a GRRS).
For example, the real roots of a Kac-Moody algebra with the Cartan matrix $\begin{pmatrix} 2 &-3\\-3 &2 \end{pmatrix}$ form a GRRS, which can not be embedded in an EARS. However, according to theorem of C.~Hoyt~\cite{H}, an indecomposable symmetrizable Kac-Moody superalgebra with an isotropic real root is affine, so
 there are no examples of this nature if the GRRS contains an isotropic root.  Eventhough the GRRSs are not exhausted
 by GRRSs coming from Kac-Moody algebra, from  Prop. 3.2
 in~\cite{You} it follows that
 an affine GRRS $R$ can be always embedded in an EARS, i.e.
 there exists an EARS $R'$ such that
 $R=\{\alpha\in R'|\ \exists\beta\in R'\ (\alpha,\beta)\not=0\}$.
  This allows to obtain a description of affine GRRSs  from the description of EARS in~\cite{You}, \cite{Yos}
  and, using~\Thm{thmdirsum}, to obtain a description of
  the irreducible GRRSs containing isotropic roots.

In Section~\ref{sect1} we give all definitions, examples of GRRSs
and explain the connection between GRRS, GRS introduced in~\cite{VGRS} and root systems of Kac-Moody superalgebras.

In Section~\ref{sectnondeg} we prove that if $R$ is an irreducible GRRS with a non-degenerate symmetric bilinear form and $R$ contains an isotropic root, then $cl(R)$ is finite (and is classified in~\cite{VGRS}).

In Section~\ref{sect3} we prove some lemmas, which are used
later.

In Section~\ref{sect4} we obtain a classification of $R$
for the case when $cl(R)$ is finite and is generated by a basis of $cl(V)$.

In Section~\ref{Ann} we obtain a classification of $R$
for the case when $cl(R)$ is the roots system of
$\mathfrak{psl}(n+1,n+1), n>1$. This is the only situation when
 the form $(-,-)$ is degenerate and
 $R$ can be finite; this holds in the case $\mathfrak{gl}(n,n)$.

In Section~\ref{sect6} we obtain a classification of $R$
for the case when $cl(R)$ is a finite WGRS, which is not a GRS
($cl(R)=BC(m,n), C(m,n)$) and describe $R$ for the remaining case
$cl(R)=BC_n$. This completes the description of GRRSs $R$ with finite $cl(R)$.

In Section~\ref{sect7} we present the correspondence
between the irreducible affine GRRSs
with $\dim Ker (-,-)=1$  and the symmetrizable affine Lie superalgebras.

\section{Definitions and basic examples}\label{sect1}
In this section we introduce the notion GRRS (generalized reflection root systems) and consider several examples.

\subsection{Notation}
 Throughout the paper $V$ will be a finite-dimensional complex vector space with a symmetric bilinear form $(-,-)$.

For  $\alpha\in V$ with $(\alpha,\alpha)\not=0$
we introduce a notation
$$k_{\alpha,\beta}:=\frac{2(\alpha,\beta)}{(\alpha,\alpha)}$$
for each $\beta\in V$,
 and we define the reflection
$r_{\alpha}\in \End V$ by the usual formula
$$r_{\alpha}(v):=v-k_{\alpha,v} v.$$
Clearly, $r_{\alpha}$ preserves $(-,-)$. Note that
\begin{equation}\label{kkk}
k_{\alpha,r_{\gamma}\beta}=k_{\alpha,\beta}-k_{\alpha, \gamma}
k_{\gamma,\beta}
\end{equation}
if $(\alpha,\alpha), (\gamma,\gamma)\not=0$.

We use the following notation: if $X$ is a subset of $V$, then
 $X^{\perp}:=\{v\in V|\ \forall x\in X\ (x,v)=0\}$ and
 $\mathbb{Z}X$ is the additive subgroup of $V$ generated by $X$
 (similarly, $\mathbb{C}X$ is a subspace of $V$ generated by $X$).

\subsection{}\label{defGRRS}
\begin{defn}{defnGRRS}
Let $V$ be  a finite-dimensional complex vector space with a symmetric bilinear form $(-,-)$.
A non-empty set $R\subset V$ is called a {\em generalized reflection root system (GRRS)}  if the following axioms hold

(GR0)   $\Ker (-,-)\cap R=\emptyset$;

(GR1) the canonical map $\mathbb{Z}R\otimes_{\mathbb{Z}}\mathbb{C}\to V$ is a bijection;

(GR2)  for each $\alpha\in R$ with $(\alpha,\alpha)\not=0$
one has $r_{\alpha}R=R$; moreover, $\beta-r_{\alpha}\beta\in\mathbb{Z}\alpha$
for each $\beta\in R$;

(GR3) for each $\alpha\in R$  with $(\alpha,\alpha)=0$ there exists
an invertible map $r_{\alpha}: R\to R$ such that
\begin{equation}\label{riso}
\begin{array}{l}
r_{\alpha}(\alpha)=-\alpha,\ \ r_{\alpha}(-\alpha)=\alpha,\\
r_{\alpha}(\beta)=\beta\ \text{ if }\beta\not=\pm\alpha,\ (\alpha,\beta)=0,\\
r_{\alpha}(\beta)\in\{\beta\pm\alpha\}\ \text{ if }(\alpha,\beta)\not=0.
\end{array}\end{equation}
\end{defn}

\subsubsection{}\label{nullity}
We sometimes write $R\subset V$ is a GRRS, meaning that
$R$ is a GRRS in $V$. If $R\subset V$ is a GRRS, we call $\alpha\in R$ a {\em root};
we call a root $\alpha$ {\em isotropic} if $(\alpha,\alpha)=0$.

\subsubsection{}
\begin{defn}{}
We call a GRRS $R\subset V$ {\em affine}
if $R$ is infinite and the image of $R$ in $V/Ker (-,-)$ is finite.
\end{defn}

\subsubsection{Remarks}\label{alphabeta}
Observe that $R=-R$ if $R$ is a GRRS.
By~\cite{VGRS}, Lem. 1.11, the axiom
(GR3) is equivalent to $R=-R$ and
the condition that for each $\alpha,\beta\in R$
with $(\alpha,\alpha)=0\not=(\alpha,\beta)$
the set $\{\beta\pm\alpha\}\cap R$
contains exactly one element.
In particular, if $R$ is a GRRS, then $r_{\beta}$ is an involution
 and it is uniquely defined for any $\beta\in R$.

In~\Thm{thmdirsum} we will show that if $(-,-)$ is non-degenerate, then
(GR1) is equivalent to the condition that $R$ spans $V$.

\subsubsection{Weyl group and $GW(R)$}\label{Weyl}
For any $X\subset V$ denote by $W(X)$ the group
generated by $\{r_{\alpha}|\ \alpha\in X, (\alpha,\alpha)\not=0\}$. Clearly, $W(R)$ preserves the bilinear form $(-,-)$.
If $R\subset V$ is a GRRS, we call $W(R)$
{\em the Weyl group} of $R$. By (GR2)
$R$ is $W(R)$-invariant.

If $R$ is a GRRS, then to each $\alpha\in R$ we assigned an involution
$r_{\alpha}\in Aut(R)$; we denote by $GW(R)$  the subgroup
of $Aut(R)$ generated by these involutions.

\subsubsection{}
In~\cite{VGRS}, Sect. 7,
 V.~Serganova considered another object,
where $r_{\alpha}$ were not assumed to be invertible, i.e.
(GR3) is substituted by

(WGR3)  for each $\alpha\in R$  with $(\alpha,\alpha)=0$ there exists
a map $r_{\alpha}: R\to R$  satisfying~(\ref{riso}).

If $V$ is endowed with a non-degenerate form and $R\subset V$
satisfies (GR0)-(GR2) and (WGR3), we
call $R$ a {\em weak GRS (WGRS)}; the finite WGRS were
classified in~\cite{VGRS}, Sect. 7.

\subsubsection{}\label{alphabeta}
Note that $R=-R$ if $R$ is a WGRS.
By~\cite{VGRS}, Lem. 1.11, the axiom
(WGR3) (resp., (GR3)) is equivalent to $R=-R$ and
for each isotropic $\alpha\in R$ the set $\{\beta\pm\alpha\}\cap R$
is non-empty (resp., contains exactly one element) if $\beta\in R$
is such that $(\beta,\alpha)\not=0$.

\subsection{Other definitions}
Classical root systems can be naturally viewed as examples of GRRS, see~\S~\ref{clRS}
below. The following definitions are motivated by this example.

\subsubsection{Subsystems}\label{subsystem}
For a GRRS $R\subset V$ we call $R'\subset R$
a {\em subsystem of $R$} if $R'$ is a GRRS in $\mathbb{C}R'$.

It turns out that $GW(R)$ does not preserve the subsystems:
$B_2$ can be naturally viewed as a subsystem of $B(2,1)$,
but $r_{\alpha}(B_2)$ is not a subsystem if $\alpha$ is isotropic.

If $R'\subset R$ does not contain isotropic roots, then $R'$ is a subsystem if and only if $R'$ is non-empty and $r_{\alpha}R'=R'$
for any $\alpha\in R'$ (note that if $\alpha$ is isotropic, then
$R':=\{\pm \alpha\}$ is not a GRRS, even though
$r_{\alpha}R'=R'$).

Note that for any non-empty $S\subset R$ the intersection $span S\cap R$
is a GRRS in  $span S$ if and only if (GR0) holds
(for any $\alpha\in (span S\cap R)$ there exists $\beta\in S$
such that $(\alpha,\beta)\not=0$).

 We say that  a non-empty set $X\subset R$ {\em generates a subsystem $R'\subset R$}
 if $R'$ is a unique minimal (by inclusion) subsystem
 containing $X$ (i.e.,
 for any subsystem $R''\subset R$ with $X\subset R''$
 one has $R'\subset R''$). In particular,
 $R$ is generated by $X$
 if $R$ is a minimal GRRS  containing $X$.

\subsubsection{}\label{reducible}
 We call a GRRS $R$ {\em reducible} if $R=R_1\cup R_2$,
 where $R_1, R_2$ are non-empty
and $(R_1,R_2)=0$. Note that in this case $R=R_1\coprod R_2$ and
$R_1, R_2$ are subsystems of $R$.
We call a GRRS $R$ {\em irreducible} if
 $R$ is not reducible.

If the bilinear form $(-,-)$ is non-degenerate on $V$,
then any GRRS is of the form $\coprod_{i=1}^k R_i\subset \oplus_{i=1}^k V_i$,
where $R_i\subset V_i$ is an irreducible GRRS.

\subsubsection{Isomorphisms}
\label{iso}
We say that two GRRSs $R\subset V,\ R'\subset V'$ are isomorphic
if there exists a linear homothety $\iota: V\to V'$ such that
$\iota(R)=R'$ (by a "homothety" we mean that $\iota$ is a
is linear isomorphism and there exists $x\in\mathbb{C}^*$
such that $(\iota(v), \iota(w))=x(v,w)$ for all $v,w\in V$).

 \subsubsection{Reduced GRRS}
 From (GR2), (GR3) one has $R=-R$.
 A GRRS $R$ is called {\em reduced} if  $\alpha,\lambda\alpha\in R$ for some $\lambda\in\mathbb{C}$ forces $\lambda=\pm 1$.
 (It is easy to see that this always holds if $\alpha$ is isotropic; if $\alpha$ is non-isotropic, then (GR2)
 gives $\lambda\in\{\pm 1,\pm \frac{1}{2},\pm 2\}$).

 \subsection{Examples}
 Let us consider several examples of GRRSs.

 \subsubsection{Classical root systems}\label{clRS}
Recall that a classical root system is a finite subset $R$
in a Euclidean space $V$ with the properties:
$0\not\in R$, $r_{\alpha}R=R$ for each $\alpha\in R$
and $r_{\alpha}\beta-\beta\in \mathbb{Z}\alpha$ for each
$\alpha,\beta\in R$. We see that $R$ is a finite GRRS in the complexification of $V$. Using~\cite{Ser}, Ch. V, it is easy to show that all finite GRRSs without
isotropic roots are of this form: $R\subset V$ is
a  finite GRRS without
isotropic roots if and only if $R\subset\mathbb{R}R$ is a classical root system.

The classical root systems were classified by  W.~Killing and E.~Cartan: the reduced irreducible classical root systems
are the series $A_n, n\geq 1$,
$B_n, n\geq 2$, $C_n, n\geq 3$, $D_n, n\geq 4$ and the exceptional root systems
$E_6, E_7, E_8, F_4, G_2$ (the lower index always stands for the dimension of $V$);
sometimes we use the notations $C_1:=A_1,C_2:=B_2$ and $D_3:=A_3$.
The irreducible non-reduced root systems of finite type are of the form $BC_n=B_n\cup C_n, n\geq 1$.

The reduced irreducible classical root systems
are the root systems of  finite-dimensional
simple complex Lie algebras.

\subsubsection{Example: GRSs introduced by V.~Serganova}
\label{GRS}
A GRS introduced by V.~Serganova in~\cite{VGRS}, Sect. 1 is
a finite GRRS $R\subset V$ with a non-degenerate form
$(-,-)$. V.~Serganova  classified these systems.
Recall the results of these classification.

A  complex simple finite-dimensional Lie superalgebras
$\fg=\fg_{\ol{0}}\oplus \fg_{\ol{1}}$ is called {\em basic classical} if
$\fg_{\ol{0}}$ is reductive and $\fg$
admits a non-degenerate invariant symmetric bilinear form $B$ with $B(\fg_{\ol{0}},\fg_{\ol{1}})=0$.
This bilinear form induces a non-degenerate
symmetric bilinear form on $\fh^*$, where $\fh$ is a Cartan subalgebra of $\fg$.
The set of roots of $\fg$ is a GRS  in $\fh^*$ if $\fg\not=\mathfrak{psl}(2,2)$.
Conversely, any GRS
is the root system of a basic classical Lie superalgebra
 differ from $\psl(2,2)$ (in particular, the non-reduced classical root system
 $BC_n$ is the root system of a basic classical
 Lie superalgebra $B(0,n)=\mathfrak{osp}(1,2n)$).

The finite WGRSs  were classified in~\cite{VGRS}, Sect. 7.
They consist of GRSs and two additional series $BC(m,n), C(m,n)$,
which can be described as follows. Let
$V$ be a complex vector space endowed with a symmetric bilinear form and an orthogonal  basis
$\{\vareps_i\}_{i=1}^m\cup\{\delta_j\}_{j=1}^n$
such that $||\vareps_i||^2=-||\delta_j||^2=1$ for
$i=1,\ldots,m, j=1,\ldots,n$.
One has
$$\begin{array}{l}
C(m,n)=\{\pm\vareps_i\pm\vareps_j;\pm 2\vareps_i \}_{1\leq i<j\leq m}
\cup\{\pm \delta_i\pm\delta_j; \pm 2\delta_i\}_{1\leq i<j\leq n}\cup\{\pm\vareps_i\pm\delta_j \}_{1\leq i\leq m}^{1\leq j\leq n},\\
BC(m,n)=C(m,n)\cup
\{\pm\vareps_i;\pm\delta_j \}_{1\leq i\leq m}^{1\leq j\leq n}.\end{array}$$

In particular, $C(1,1)$ is the root system of $\psl(2,2)$.

\subsubsection{Real roots of symmetrizable Kac-Moody algebras}
\label{exasymm}
Let $C$ be a symmetric $n\times n$ matrix with non-zero diagonal entries satisfying the condition $2c_{ij}/c_{ii}\in \mathbb{Z}$
for each $i,j$. Let $\Pi:=\{\alpha_1,\ldots,\alpha_n\}$
be a basis of a complex vector space $V$ and $(-,-)$ is
a symmetric bilinear form on $V$ given by $(\alpha_i,\alpha_j)=c_{ij}$.
Let $W$ be the subgroup of $GL(V)$ generated by $r_{\alpha_i}$ for
$i=1,\ldots,n$. Then $R(C):=W\Pi$ is a reduced GRRS without isotropic roots. If $C$ is such that $2c_{ij}/c_{ii}<0$ for each $i\not=j$,
then $C$ is a symmetric Cartan matrix and $R(C)$
is the set of real roots
of a symmetrizable Kac-Moody algebra
$\fg(C)$. Using the classification
of Cartan matrices in~\cite{K2} Thm. 4.3,
one readily sees that for a  symmetric Cartan matrix $C$,
$R(C)$ is affine and irreducible if and only if $\fg(C)$ is an affine Kac-Moody algebra.

Recall that
a basic classical Lie superalgebra $\fg\not=\mathfrak{psl}(n,n)$
is a symmetrizable Kac-Moody superalgebra and that
a finite-dimensional Kac-Moody superalgebra $\fg\not=\mathfrak{gl}(n,n)$ is a  basic classical Lie superalgebra. The root system of a finite-dimensional Kac-Moody superalgebra is a GRRS.
The set of real roots of a symmetrizable affine Kac-Moody superalgebra $\fg$
is an affine GRRS (with $\dim Ker (-,-)$ equals  $1$
if $\fg\not=\fgl(n,n)^{(1)}$ and equals  $2$ for
$\fgl(n,n)^{(1)}$); these algebras were classified by van de Leur in~\cite{vdL}.

Let $\fh$ be a Cartan subalgebra of a symmetrizable Kac-Moody algebra $\fg(C)$. Then $V$ is a subspace of $\fh^*$ spanned by $\Pi$. Take $\lambda\in\fh^*$ and define
$$\Delta_{re}(\lambda):=\{\alpha\in \Delta_{re}|\
\frac{2(\lambda,\alpha)}{(\alpha,\alpha)}\in\mathbb{Z}\}.$$
Then $\Delta_{re}(\lambda)$ is a subsystem of $\Delta_{re}$.

The above construction gives a reduced GRRS. Examples of non-reduced GRRSs without isotropic roots
can be obtained by the following procedure.
Fix $J\subset \{1,\ldots,n\}$ such that
$c_{ji}/c_{jj}\in\mathbb{Z}$ for each $j\in J, i\in \{1,\ldots,n\}$
and introduce
$$R(C)_J:=(\cup_{j\in J} W 2\alpha_j)\cup R.$$
It is easy to check that
 $R(C)_J$ is a GRRS (which is not reduced for $J\not=\emptyset$).
If $2c_{ij}/c_{ii}<0$ for each $i\not=j$, then $R(C)_J$
is  the set of real roots of a symmetrizable Kac-Moody superalgebra $\fg(C, J)$; as before $R(C)_J$ is affine and irreducible
if and only if $\fg(C, J)$ is affine.  By~\cite{H},
an indecomposable symmetrizable Kac-Moody superalgebra with an isotropic real root is finite-dimensional or affine. In~\Cor{corimR} we show that
an  irreducible GRRS which contains an isotropic root is either finite or affine.

\subsection{Quotients}\label{bijquo}
Let $R\subset V$ be a GRRS and
$V'$ be a subspace of $\Ker (-,-)$.
One readily sees that the image of $R$
in $V/V'$ satisfies the axioms (GR0), (GR2) and (WGR3).
We call this image a {\em quotient} of $R$ and
a {\em bijective quotient} if the restriction
of the canonical map $V\to V/V'$ to $R$ is injective.
The minimal quotient of $R$, denoted by $cl(R)$, is the image of $R$ in $V/Ker (-,-)$; by~\Cor{corimR} (i), $cl(R)$
is a WGRS.

\subsubsection{}\label{lem3}
Let $R\subset V$ be a GRRS, $V'$  be a subspace of $\Ker (-,-)$,
and $\iota: V\to V/V'$ be the canonical map.
Assume that the quotient $\iota(R)$ is a GRRS.
We claim that for any subsystem $R'\subset \iota(R)$,
the preimage of $R'$ in $R$, i.e. $\iota^{-1}(R')\cap R$,
is again a GRRS (and a subsystem of
$R$). The claim
 follows from the formula
$\iota(r_{\alpha}\beta)=r_{\iota(\alpha)} \iota(\beta)$
for each $\alpha,\beta\in R$
(note that $r_{\iota(\alpha)}$ is well-defined, since $\iota(R)$ is a GRRS).

\subsection{Direct sums}
Let $R_1\subset V_1$, $R_2\subset V_2$ be GRRSs.
Then $(R_1\cup R_2)\subset (V_1\oplus V_2)$ is again a GRRS.

Let $R=\cup_{i=1}^k R_i\subset V$, where $(R_i,R_j)=0$ for $i\not=j$,
and let $V_i$ be the span of $R_i$. Clearly, $R_i$ is a GRRS in $V_i$.
Since the natural map $\oplus_{i=1}^k V_i \to V$
preserves the form $(-,-)$, $R$ is a bijective quotient
of $\cup_{i=1}^k R_i\subset \oplus_{i=1}^k V_i$.
We conclude that any GRRS is a bijective quotient of
$\cup_{i=1}^k R_i\subset \oplus_{i=1}^k V_i$, where
$R_i\subset V_i$ are irreducible GRRSs. In particular,
if the form $(-,-)$ on $V$ is non-degenerate, then
$V=\oplus_{i=1}^k V_i$.

\subsection{Affinizations}\label{defaff}
Let $V$ be as above and $X\subset V$ be any subset.
Take $V^{(1)}=V\oplus\mathbb{C}\delta$ with the bilinear form $(-,-)'$
such that $\delta\in\Ker (-,-)'$ and the restriction
of $(-,-)'$ to $V$ coincides with the original form $(-,-)$ on $V$.
Set $X^{(1)}:=X+\mathbb{Z}\delta=\{\alpha+s\delta|\ \alpha\in X, s\in \mathbb{Z}\}$.

One readily sees that $R^{(1)}\subset V^{(1)}$ is a GRRS
if $R$ is a GRRS and $R$ is a quotient of $R^{(1)}$
in $V^{(1)}/\mathbb{C}\delta=V$.

We call  $R^{(1)}\subset V^{(1)}$ the {\em affinization} of  $R\subset V$
and use the notation $R^{(n)}\subset V^{(n)}$, where
 $R^{(n+1)}:=(R^{(n)})^{(1)}$,
$V^{(n+1)}:=(V^{(n)})^{(1)}$.

If $R$ is a finite GRRS, then $R^{(n)}$  is an  affine GRRS for any $n\geq1$.

Note that the affinizations of non-isomorphic GRRS can be isomorphic,
see~\Prop{propAnnx} (iii).

\subsection{Generators of a GRRS}
Let $R\subset V$ be a GRRS. Recall that a non-empty subset $X\subset R$ generates a subsystem
$R'$ if $R'$ is a unique minimal (by inclusion) subsystem of $R$ containing $X$.
If $R$ has no isotropic roots, then  any non-empty $X\subset R$ generates a unique subsystem, namely, $W(X)X$.
The following lemma gives a sufficient condition when $X$ generates a subsystem.

\subsubsection{}
\begin{lem}{lemGRS1}
Let $R\subset V$ be a GRRS.

(i) If $R'\subset R$ satisfies (GR2), (GR3), then
$R'':=R'\setminus (R')^{\perp}$ is either empty or is a GRRS.

(ii) If a non-empty $X\subset R$ is such that $X\cap X^{\perp}=\emptyset$, then
$X$ generates a subsystem $R'$ of $R$.
\end{lem}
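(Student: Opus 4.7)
The plan is to prove (i) by directly verifying the four GRRS axioms for $R'':=R'\setminus(R')^{\perp}$ in the ambient space $V'':=\mathbb{C}R''$, and then to derive (ii) by an iterative construction that is certified as a GRRS by (i).

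For (i), I would dispatch the axioms one at a time. For (GR0): given $\alpha\in R''$ with witness $\beta\in R'$ satisfying $(\alpha,\beta)\neq 0$, the witness automatically lies in $R''$ (since $\alpha\in R'$ and $(\alpha,\beta)\neq 0$ forbid $\beta\in(R')^{\perp}$), so $\alpha$ is not in the kernel of $(-,-)$ restricted to $V''$. For (GR1): $\mathbb{Z}R''$ is a subgroup of the free abelian group $\mathbb{Z}R$ (which has rank $\dim V$ by (GR1) for $R$), hence itself free, and by flatness the natural map $\mathbb{Z}R''\otimes_{\mathbb{Z}}\mathbb{C}\to V$ is injective with image $\mathbb{C}R''=V''$. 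For (GR2): for non-isotropic $\alpha\in R''$ the linear reflection $r_{\alpha}$ preserves both $R'$ and the form, so it permutes $(R')^{\perp}$ and therefore preserves $R''$; the congruence $\beta-r_{\alpha}\beta\in\mathbb{Z}\alpha$ is inherited from $R$. For (GR3): for isotropic $\alpha\in R''$ the involution $r_{\alpha}:R'\to R'$ must restrict to $R''$; the only nontrivial case is $(\alpha,\beta)\neq 0$, where $r_{\alpha}\beta=\beta\pm\alpha$, and if this element lay in $(R')^{\perp}$ then pairing with $\alpha$ would give $(\beta,\alpha)=0$, a contradiction.

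For (ii), I would build the candidate subsystem iteratively. Set $X_0:=X$ and $X_{n+1}:=X_n\cup\{r_{\alpha}\beta\mid\alpha,\beta\in X_n\}$, using the well-defined reflections $r_{\alpha}:R\to R$ inherited from the GRRS structure of $R$. Then $X_{\infty}:=\bigcup_n X_n\subset R$ is closed under $r_{\alpha}$ for every $\alpha\in X_{\infty}$, hence satisfies (GR2) and (GR3). Applying (i) to $X_{\infty}$, the set $R':=X_{\infty}\setminus X_{\infty}^{\perp}$ is either empty or a GRRS. The hypothesis $X\cap X^{\perp}=\emptyset$, together with $X_{\infty}^{\perp}\subset X^{\perp}$ (since $X\subset X_{\infty}$), forces $X\subset R'$, so $R'$ is non-empty, hence a subsystem of $R$ containing $X$. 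Minimality is automatic: any subsystem $S\supset X$ is closed under the reflections $r_{\alpha}$ with $\alpha\in S$, so by induction $X_n\subset S$ for all $n$, whence $R'\subset X_{\infty}\subset S$.

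The only genuinely delicate points I foresee are the injectivity in (GR1), which requires extracting the free-abelian-group structure of $\mathbb{Z}R$ from axiom (GR1) for $R$, and the short pairing argument in (GR3) showing that $r_{\alpha}$ restricts to $R''$; everything else is routine propagation from the axioms of $R$.
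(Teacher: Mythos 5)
Your argument is correct and is essentially the paper's: part (i) is the same axiom-by-axiom check (the paper dismisses (GR1) as clear, while your flatness argument supplies the detail --- though the parenthetical claim that $\mathbb{Z}R$ is free is neither clearly forced by (GR1) nor needed, since flatness of $\mathbb{C}$ over $\mathbb{Z}$ already gives injectivity of $\mathbb{Z}R''\otimes_{\mathbb{Z}}\mathbb{C}\to\mathbb{Z}R\otimes_{\mathbb{Z}}\mathbb{C}\to V$), and part (ii) uses the identical iterated-closure construction $X_0\subset X_1\subset\cdots$ followed by an appeal to (i). The only real divergence is at the end of (ii): the paper proves $X_\infty\cap X_\infty^{\perp}=\emptyset$ by a minimal-index argument, so that $X_\infty$ itself is the generated subsystem, whereas you excise $X_\infty^{\perp}$ and note that $X\cap X_\infty^{\perp}\subset X\cap X^{\perp}=\emptyset$ keeps $X$ inside the resulting GRRS; this sidesteps the one slightly delicate step in the paper's proof while still producing a subsystem that contains $X$ and is contained in every subsystem containing $X$, which is all that minimality requires.
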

\begin{proof}
(i) Let $R''$ be non-empty and let $V''$ be the span of $R''$.
 Let us verify that $R''\subset V''$ is a GRRS. Clearly, (GR1) holds.
If $x\in R''$, then $(x,y)\not=0$ for some $y\in R'$, so
$y\in R''$; thus $x$ is not in the kernel of the restriction
of $(-,-)$ to $V''$, so (GR0) holds. It remains to verify that for each
$\alpha,\beta\in R''$ one has $r_{\alpha}\beta\in R''$. Indeed,
since (GR2), (GR3) hold for $R'$, $r_{\alpha}\beta\in R'$.
If $(\alpha,\beta)=0$, then $r_{\alpha}\beta=\beta\in R''$;
otherwise $(r_{\alpha}\beta,\alpha)\not=0$
(for $(\alpha,\alpha)\not=0$ one has $(r_{\alpha}\beta,\alpha)=-(\beta,\alpha)$
and for $(\alpha,\alpha)\not=0$ one has $(r_{\alpha}\beta,\alpha)=(\beta,\alpha)$).
Hence $r_{\alpha}\beta\in R''$ as required.

(ii) By~\S~\ref{alphabeta}, for any $\alpha\in R$ the map $r_{\alpha}:R\to R$
satisfying (GR2), (GR3) respectively is uniquely defined.
Take
$$X_0:=X,\ \  X_{i+1}:=\{\pm r_{\alpha}\beta|\ \alpha,\beta\in X_i\},\ \
R':=\bigcup_{i=0}^{\infty} X_i.$$
Clearly, $R'$ satisfies (GR2), (GR3) and lies in any subsystem containing $X$.
Let us show $R'$ is a GRS. By (i), it is enough to
verify that $R'\cap (R')^{\perp}=\emptyset$.
Suppose that $v\in R'\cap (R')^{\perp}$; let $i$ be minimal such that
$v\in X_i$.  Since $X\cap X^{\perp}=\emptyset$ we have $i\not=0$, so
$v=r_{\alpha}\beta$ for some $\alpha,\beta\in X_{i-1}$ with $(\alpha,\beta)\not=0$. Since $\beta=r_{\alpha}v\not=v$,
one has $(\alpha,v)\not=0$, a contradiction.
One readily sees that $(v,\alpha)=\pm (\alpha,\beta)$, that is $(v,R')\not=0$, a contradiction.
\end{proof}

\subsubsection{}\label{DeltaPi}
Let $\fg$ be a basic classical Lie superalgebra, $\Delta\subset\fh^*$ be its roots system
and $\Pi\subset\Delta$ be a set of simple roots.
If $\fg\not=\mathfrak{psl}(n,n)$, $\Pi$ consists
of linearly independent vectors.
If $\fg\not=\mathfrak{osp}(1,2n)$ (i.e., $\Delta\not=BC_n=B(0,n)$),
then $\Delta$ is generated by $\Pi$.
We conclude that for $\fg\not=\mathfrak{psl}(n,n),
\mathfrak{osp}(1,2n)$, the root system $\Delta\subset V$ is generated
by a basis of $V$.

\section{The case when $(-,-)$ is non-degenerate}\label{sectnondeg}
In this section  $V\not=0$ is a finite-dimensional complex vector space
and $R\subset V$ satisfies (GR0), (GR2), (WGR3).
As before we say that $R\subset V$ is irreducible
if $R\not= R_1\coprod R_2$,
where $R_1,R_2$ are non-empty sets satisfying (GR2), (WGR3)
 and $(R_1,R_2)=0$.

We will prove the following theorem.

\subsection{}
\begin{thm}{thmdirsum}
Assume that the form $(-,-)$ is non-degenerate and $R\subset V$ satisfies (GR0), (GR2), (WGR3)
and

(GR1'): $R$ spans $V$.

Then

(i) If $R$ is irreducible and contains an isotropic root, then
$R$ is  finite (such $R$s are classified in~\cite{VGRS});

(ii) $R$ is a WGRS.
\end{thm}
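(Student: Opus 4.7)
My plan is to reduce (ii) to the irreducible case and then split by whether $R$ contains an isotropic root, handling the isotropic-free branch directly and invoking part (i) for the other. The reduction is exactly the one sketched at the end of \S 1.6: non-degeneracy of $(-,-)$ on $V$ plus the spanning property (GR1') force any orthogonal splitting $R=R_1\sqcup R_2$ (with $(R_1,R_2)=0$) to come from an internal direct sum $V=\mathbb{C}R_1\oplus\mathbb{C}R_2$, on each summand of which the form remains non-degenerate; since the axioms (GR0), (GR2), (WGR3) and (GR1) are additive over orthogonal direct sums, it suffices to prove (i) and (ii) when $R$ is irreducible.

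When irreducible $R$ has no isotropic root, I would choose a basis $\Pi=\{\alpha_1,\ldots,\alpha_n\}\subset R$ of $V$ (using (GR1')) and consider the Cartan-type matrix $C=(k_{\alpha_i,\alpha_j})$. By (GR2) its entries are integers, and it equals $2\operatorname{diag}((\alpha_i,\alpha_i))^{-1}$ times the Gram matrix, so its determinant is non-zero. For any $\beta=\sum c_j\alpha_j\in R$, the integer vector $(k_{\alpha_i,\beta})_i$ equals $C(c_j)_j$, forcing $c_j\in\tfrac{1}{\det C}\mathbb{Z}$; hence $\mathbb{Z}R\subset\tfrac{1}{\det C}\mathbb{Z}\Pi$ is a full-rank lattice in $V$, which is (GR1).

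When irreducible $R$ contains an isotropic root, part (i) asserts $R$ is finite. Fix isotropic $\alpha\in R$; by non-degeneracy and (GR1') there is $\beta\in R$ with $(\alpha,\beta)\ne 0$. I would induct on $n=\dim V$. The base $n=2$ is a direct split on whether $(\beta,\beta)=0$, using the string-type restrictions recalled in \S 1.2.3 and \S 1.2.6 to enumerate all possibilities, each finite and matching a rank-two entry of Serganova's classification in \cite{VGRS}, \S 7. For the inductive step I would pass to the hyperbolic quotient $V':=\alpha^\perp/\mathbb{C}\alpha$ (dimension $n-2$, inheriting a non-degenerate form since $\alpha\in\alpha^\perp$ and $(\alpha^\perp)^\perp=\mathbb{C}\alpha$ by non-degeneracy of $V$) and study the image $R'\subset V'$ of $R\cap\alpha^\perp$. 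After verifying that $R'$ satisfies (GR0), (GR1'), (GR2), (WGR3) in $V'$ (decomposing into irreducibles if needed) the inductive hypothesis gives $|R'|<\infty$. Every $\gamma\in R$ with $(\alpha,\gamma)\ne 0$ is grouped by $r_\alpha$ with $\gamma\pm\alpha$, so the natural map $R\setminus\{\pm\alpha\}\to V/\mathbb{C}\alpha$ has fibers of size at most $2$, bounding $|R|$. Finally, finiteness plus (GR1') yield (GR1), exploiting the fact that non-isotropic roots must exist (since $(r_\alpha\beta,r_\alpha\beta)=\pm 2(\alpha,\beta)\ne 0$ whenever $\alpha,\beta$ are isotropic with $(\alpha,\beta)\ne 0$) so that the Cartan-matrix argument of the previous paragraph can be adapted to a non-isotropic subsystem and then extended.

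The principal obstacle will be the inductive passage $R\rightsquigarrow R'$ in part (i): one must guarantee that $R'$ spans $V'$ and retains enough irreducibility to invoke the inductive hypothesis, even though every root $\gamma$ with $(\alpha,\gamma)\ne 0$ is erased by the restriction to $\alpha^\perp$. The resolution is to use the irreducibility of $R$ to connect every direction in $V$ to $\alpha$ by chains of roots with non-vanishing pairings, and then carefully push these chains through the quotient---an argument modeled on Hoyt's~\cite{H} rank-reduction proof for symmetrizable Kac--Moody superalgebras with an isotropic simple root.
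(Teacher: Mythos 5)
Your reduction to the irreducible case is fine, and your treatment of the isotropic-free branch is actually a nice alternative to the paper's: where the paper normalizes the form so that all pairings are rational and deduces that complex linear dependence among roots implies rational linear dependence, you use integrality of $C=(k_{\alpha_i,\alpha_j})$ together with $\det C\neq 0$ to trap $\mathbb{Z}R$ inside the lattice $\frac{1}{\det C}\mathbb{Z}\Pi$; both give (GR1) in that case. The problem is part (i), which is the heart of the theorem, and there your rank-induction sketch has a genuine gap that I do not see how to close. Passing to $R':=$ the image of $R\cap\alpha^{\perp}$ in $V'=\alpha^{\perp}/\mathbb{C}\alpha$ and establishing $|R'|<\infty$ gives no control over the roots $\gamma$ with $(\alpha,\gamma)\neq 0$: such a $\gamma$ never lies in $\alpha^{\perp}$, and neither do $\gamma\pm\alpha$ (their pairing with $\alpha$ is unchanged), so these roots are invisible to the induction, and the finiteness of the image of $R$ in $V/\mathbb{C}\alpha$ is simply not established. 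Moreover, the claimed fiber bound of $2$ for $R\setminus\{\pm\alpha\}\to V/\mathbb{C}\alpha$ is unjustified: the pairing of $\gamma$ with $r_{\alpha}\gamma\in\{\gamma\pm\alpha\}$ groups roots into pairs but does not bound the number of such pairs inside a single coset $\gamma+\mathbb{C}\alpha$; under (WGR3), which only guarantees that \emph{at least} one of $\gamma\pm\alpha$ is a root, even an infinite string $\gamma,\gamma+\alpha,\gamma+2\alpha,\dots$ is not excluded a priori. Finally, your plan to "adapt the Cartan-matrix argument to a non-isotropic subsystem and then extend" founders on the fact that the non-isotropic roots need not span $V$ (already for $A(1,0)$ they span a line in a plane), so there is no non-isotropic basis to feed into Cramer's rule.

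The paper avoids rank induction entirely and proves finiteness by a uniform boundedness argument. Its key step is \Lem{isoperp}: if $R$ is irreducible and contains an isotropic root, then \emph{every} root pairs non-trivially with some isotropic root (proved by exhibiting an orthogonal decomposition $R=R_1\coprod R_2$ that irreducibility forbids). This forces $k_{\alpha,\gamma}\in\{0,\pm1,\dots,\pm4\}$ for all non-isotropic $\alpha$ and all $\gamma$ (\Cor{cor1234}), which bounds the ratios $(\alpha,\alpha)/(\beta,\beta)$, hence the set of norms via a maximal linearly independent set of non-isotropic roots, hence the set of all inner products $\{(\alpha,\beta)\mid\alpha,\beta\in R\}$; since the form is non-degenerate and $R$ contains a basis $B$ of $V$, each root is determined by its pairings with $B$, and finiteness follows. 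Any repair of your induction would need a uniform bound on $\{(\alpha,\gamma)\mid\gamma\in R\}$ for the fixed isotropic $\alpha$, which is essentially this corollary again, so the direct argument seems hard to avoid.
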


\subsubsection{}
\begin{cor}{corimR}
(i) If $R$ is a GRRS, then the image of $R$ in $V/Ker (-,-)$
is a WGRS.

(ii) If $R$ is an irreducible GRRS which contains an isotropic root, then $R$ is either finite or affine.
\end{cor}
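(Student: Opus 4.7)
The plan is to deduce both parts of the corollary from \Thm{thmdirsum} applied to $cl(R) \subset V/\Ker(-,-)$ with the induced (now non-degenerate) form.

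For (i), I would first invoke the general discussion of quotients in \S\ref{bijquo}: since $\Ker(-,-) \subset \Ker(-,-)$ trivially, the image $cl(R)$ satisfies (GR0), (GR2) and (WGR3). The induced bilinear form on $V/\Ker(-,-)$ is non-degenerate by construction, and $cl(R)$ spans $V/\Ker(-,-)$ because $R$ spans $V$ by (GR1). Hence (GR1') holds for $cl(R)$, and \Thm{thmdirsum} (ii) yields that $cl(R)$ is a WGRS.

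For (ii), I would use (i) together with \Thm{thmdirsum} (i). Let $\iota : V \to V/\Ker(-,-)$ be the canonical map. Existence of an isotropic root in $cl(R)$ is inherited: if $\alpha \in R$ is isotropic, then $\iota(\alpha) \neq 0$ by (GR0), while the induced form satisfies $(\iota(\alpha),\iota(\alpha)) = (\alpha,\alpha) = 0$. To check irreducibility of $cl(R)$ I would argue by contradiction: suppose $cl(R) = R_1' \coprod R_2'$ with $R_1', R_2'$ non-empty, satisfying (GR2), (WGR3) and $(R_1', R_2') = 0$. Setting $R_i := \iota^{-1}(R_i') \cap R$ gives a decomposition $R = R_1 \cup R_2$ into disjoint non-empty sets, and the identity $(\iota(\alpha),\iota(\beta)) = (\alpha,\beta)$ that defines the induced form shows $(R_1, R_2) = 0$ in $V$, contradicting the irreducibility of $R$ (whose definition in \S\ref{reducible} requires only non-emptiness and orthogonality of the pieces). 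Thus $cl(R)$ is irreducible, and \Thm{thmdirsum} (i) gives that $cl(R)$ is finite. By the definition of affine GRRS in \S\ref{nullity}, $R$ is then either finite or (if infinite) affine.

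There is no serious obstacle here; the proof is essentially bookkeeping. The only point that demands care is the pullback step in the irreducibility argument, namely checking that orthogonality under the induced form on $V/\Ker(-,-)$ lifts to orthogonality under $(-,-)$ on $V$. This is immediate from how the quotient form is defined, but deserves to be spelled out to make clear that the reduction does not lose the hypothesis.
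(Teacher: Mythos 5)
Your proof is correct and follows exactly the route the paper intends: the corollary is stated without a separate proof precisely because it is obtained by applying \Thm{thmdirsum} to $cl(R)\subset V/\Ker(-,-)$, after noting (as in \S\ref{bijquo}) that the quotient satisfies (GR0), (GR2), (WGR3), spans, and inherits irreducibility and the presence of an isotropic root. The details you spell out (the pullback of an orthogonal decomposition to contradict irreducibility of $R$) are the right bookkeeping and match the paper's implicit argument.
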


\subsubsection{}
\begin{rem}{}
By~\S~\ref{exasymm}, any symmetric $n\times n$ matrix $C$ with non-zero diagonal entries and
$2c_{ij}/c_{ii}\in\mathbb{Z}$ for each $i\not=j$, gives a GRRS.
Clearly, $(-,-)$ is non-degenerate if and only if $\det C\not=0$.
In this way we obtain a lot of examples of  infinite GRRSs with
non-degenerate $(-,-)$ (but they do not contain isotropic roots!).
\end{rem}

\subsection{Proof of~\Thm{thmdirsum}}\label{prthmdir}
We will use the following lemmas.

\subsubsection{}
\begin{lem}{lem2}
For any $\beta\in R$
there exists $\alpha\in R$ such that
$r_{\alpha}\beta$ is non-isotropic and
$(\beta,r_{\alpha}\beta)\not=0$.
\end{lem}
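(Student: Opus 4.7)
The plan is to split on whether $\beta$ is isotropic. If $(\beta,\beta)\not=0$, I would simply take $\alpha=\beta$: then $r_\beta\beta=-\beta$ is non-isotropic and $(\beta,r_\beta\beta)=-(\beta,\beta)\not=0$. So everything is in the isotropic case $(\beta,\beta)=0$.

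For isotropic $\beta$, the key observation to record first is that $\alpha$ must itself be isotropic. Indeed, if $(\alpha,\alpha)\not=0$ then $r_\alpha$ preserves $(-,-)$, so $r_\alpha\beta$ has the same (zero) norm as $\beta$, contradicting the conclusion. Conversely, if $\alpha$ is isotropic and $(\alpha,\beta)\not=0$, then by (WGR3) one has $r_\alpha\beta=\beta+m\alpha$ for some $m\in\{\pm 1\}$, and the two desired quantities compute cleanly:
\[
(r_\alpha\beta,r_\alpha\beta)=(\beta,\beta)+2m(\alpha,\beta)+m^2(\alpha,\alpha)=2m(\alpha,\beta)\not=0,\qquad (\beta,r_\alpha\beta)=m(\alpha,\beta)\not=0.
\]
So the whole lemma is reduced to the following claim: for every isotropic $\beta\in R$ there exists an isotropic $\alpha\in R$ with $(\alpha,\beta)\not=0$.

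To establish this claim, I would use non-degeneracy together with (GR1'). Since $(-,-)$ is non-degenerate on $V$ and $R$ spans $V$, there exists $\gamma\in R$ with $(\gamma,\beta)\not=0$. If $\gamma$ is isotropic we are done, taking $\alpha=\gamma$. Otherwise $\gamma$ is non-isotropic, and I would apply (GR2) to produce
\[
\alpha:=r_\gamma\beta=\beta-k_{\gamma,\beta}\gamma\in R.
\]
Since $r_\gamma$ preserves the form, $(\alpha,\alpha)=(\beta,\beta)=0$, so $\alpha$ is isotropic; and
\[
(\alpha,\beta)=(\beta,\beta)-k_{\gamma,\beta}(\gamma,\beta)=-\frac{2(\gamma,\beta)^2}{(\gamma,\gamma)}\not=0.
\]
This $\alpha$ then satisfies the claim, and combined with the opening observation finishes the proof.

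I do not expect a genuine obstacle here: the only subtlety is recognizing that the non-isotropic $\alpha$ are useless (they cannot destroy isotropy), which forces the search into the isotropic part of $R$; once that is noticed, (GR1') plus non-degeneracy produce the needed root either directly or via one reflection, and the final norm and pairing computations are immediate from (WGR3).
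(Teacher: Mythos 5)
Your proof is correct and follows essentially the same route as the paper's: reduce to producing an isotropic root $\alpha$ with $(\alpha,\beta)\not=0$, obtaining it either directly or as $r_\gamma\beta$ for a non-isotropic $\gamma$ with $(\gamma,\beta)\not=0$. You merely make explicit the computations the paper compresses into ``one of the roots $r_{\gamma}\beta$ or $r_{r_{\gamma}\beta}\beta$ is non-isotropic.''
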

\begin{proof}
If $\beta$ is non-isotropic we take $\alpha:=\beta$.
Let $\beta$ be isotropic. Notice that $(\beta,r_{\alpha}\beta)=0$ implies $r_{\alpha}\beta=\pm \beta$, so it is enough
to show that $r_{\alpha}\beta$ is non-isotropic for some $\alpha\in R$. By (GR0) $\beta\not\in\Ker(-,-)$, so there exists $\gamma\in R$ such that $(\gamma,\beta)\not=0$, which implies
$(\beta,r_{\gamma}\beta)\not=0$. As a consequence,
one of the roots
$r_{\gamma}\beta$ or $r_{r_{\gamma}\beta}\beta$ is non-isotropic.
\end{proof}

\subsubsection{}
\begin{lem}{isoperp}
Let $R$ be  irreducible and contains an isotropic root.
For each $\alpha\in R$ there exists an isotropic root $\beta\in R$ with
$(\alpha,\beta)\not=0$.
\end{lem}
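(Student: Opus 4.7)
The plan is to argue by contradiction using the irreducibility of $R$. Suppose some $\alpha_{0}\in R$ were orthogonal to every isotropic root of $R$. Partition $R=R_{A}\coprod R_{B}$, where
$$R_{B}:=\{\alpha\in R\,|\,(\alpha,\gamma)\neq 0\text{ for some isotropic }\gamma\in R\},\qquad R_{A}:=R\setminus R_{B}.$$
Then $\alpha_{0}\in R_{A}$, so $R_{A}\neq\emptyset$, while $R_{B}\neq\emptyset$ since every isotropic $\gamma\in R$ lies outside $\Ker(-,-)$ by (GR0) and thus pairs nontrivially with some element of $R$. My aim is to contradict irreducibility by establishing $(R_{A},R_{B})=0$ together with the axioms (GR2) and (WGR3) for $R_{A}$ and $R_{B}$ individually.

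The heart of the proof is to show $(\alpha,\beta)=0$ whenever $\alpha\in R_{A}$ and $\beta\in R_{B}$. If $\beta$ is itself isotropic, then $(\alpha,\beta)=0$ follows directly from $\alpha\in R_{A}$. If $\beta$ is non-isotropic, pick an isotropic $\gamma\in R$ with $(\beta,\gamma)\neq 0$ and consider $r_{\beta}\gamma=\gamma-k_{\beta,\gamma}\beta$: it lies in $R$ by (GR2), and since reflections preserve $(-,-)$ it is again isotropic. Because $\alpha\in R_{A}$ is orthogonal to every isotropic root, $(\alpha,r_{\beta}\gamma)=0$, which expands to $(\alpha,\gamma)-k_{\beta,\gamma}(\alpha,\beta)=0$. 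Combining $(\alpha,\gamma)=0$ with $k_{\beta,\gamma}\neq 0$ forces $(\alpha,\beta)=0$. Notice this argument does not split on whether $\alpha$ is isotropic.

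To complete the contradiction I still need $R_{A}$ and $R_{B}$ each to satisfy (GR2) and (WGR3). Every reflection $r_{\delta}$ ($\delta\in R$) preserves $(-,-)$ and therefore permutes the isotropic roots of $R$, so the defining conditions of $R_{A}$ and $R_{B}$ are each stable under $r_{\delta}$. Consequently $R_{A}$ and $R_{B}$ are closed under the reflections coming from their own elements, and the axioms (GR2), (WGR3) of $R$ restrict to the same axioms on $R_{A}$ and $R_{B}$, contradicting irreducibility.

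The step that I expect to require the most care is the isotropic-$\alpha$ sub-case: a priori, when $\alpha\in R_{A}$ is itself isotropic, (GR0) only supplies some $\delta\in R$ with $(\alpha,\delta)\neq 0$, not an isotropic such $\delta$, so a naive argument would need a separate construction. The fabrication of the isotropic root $r_{\beta}\gamma$ out of a non-isotropic $\beta\in R_{B}$ and an existing isotropic $\gamma\in R$ sidesteps this difficulty, which is why the argument runs uniformly in $\alpha$ and no genuine obstacle appears.
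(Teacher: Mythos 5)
Your overall strategy is the same as the paper's: partition $R$ according to orthogonality to the set of isotropic roots and contradict irreducibility. Your proof of the key orthogonality $(R_A,R_B)=0$ is correct and in fact more direct than the paper's (which argues by contradiction through closure of the two pieces under reflections), and your choice to put the isotropic roots of $R_{iso}^{\perp}$ into $R_A$ rather than excluding them lets you avoid the paper's separate final step for isotropic $\alpha$. The existence argument for $R_B\neq\emptyset$ is also fine, provided you note that you are using (GR1') ($R$ spans $V$) to convert ``$\gamma\notin\Ker(-,-)$'' into ``$(\gamma,\beta)\neq 0$ for some $\beta\in R$''.

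There is, however, one false assertion in your verification that $R_A$ and $R_B$ satisfy (GR2) and (WGR3), and this verification is genuinely needed because irreducibility in this section is defined via decompositions into pieces satisfying those axioms. You claim that \emph{every} reflection $r_\delta$, $\delta\in R$, preserves $(-,-)$ and hence permutes the isotropic roots. This is true for non-isotropic $\delta$ (where $r_\delta$ is a linear isometry), and that already gives (GR2) for both pieces. But for isotropic $\delta$ the map $r_\delta$ is not linear and does \emph{not} permute the isotropic roots: if $\gamma,\delta$ are both isotropic with $(\gamma,\delta)\neq 0$, then $r_\delta\gamma=\gamma\pm\delta$ has $\|\gamma\pm\delta\|^2=\pm 2(\gamma,\delta)\neq 0$. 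The needed facts are nonetheless easy to supply: once $(R_A,R_B)=0$ is known, an isotropic root in $R_A$ would be orthogonal to all of $R$ (to $R_B$ by the orthogonality, to $R_A$ because every element of $R_A$ is orthogonal to every isotropic root), hence would lie in $\Ker(-,-)$, contradicting (GR0); so $R_A$ contains no isotropic roots and (WGR3) is vacuous for it. For an isotropic $\delta\in R_B$ and $\beta\in R_B$ with $(\delta,\beta)\neq 0$ one has $r_\delta\beta=\beta\pm\delta$ and $(r_\delta\beta,\delta)=(\beta,\delta)\neq 0$, so $r_\delta\beta\in R_B$, while $r_\delta$ fixes $R_A$ pointwise since $(\delta,R_A)=0$. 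With these replacements your argument is complete.
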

\begin{proof}
Let $R_{iso}\subset R$ be the set of isotropic roots.
Let   $R_2\subset R$ be the set of non-isotropic roots
in $R\cap R_{iso}^{\perp}$ and $R_1:=R\setminus R_2$.
One readily sees that $R_2$ is a subsystem.

Let us verify that $R_1$ is also a subsystem.
Indeed, let $\alpha,\beta\in R_1$
be such that
$(\alpha,\beta)\not=0$. One has
$(r_{\beta}\alpha,\beta)\not=0$, so
$r_{\beta}\alpha\in R_1$ if $\beta$ is isotropic.
If $\beta$ is non-isotropic, then, taking $\gamma\in R_{iso}$ such that
$(\alpha,\gamma)\not=0$, we get
$(r_{\beta}\alpha,r_{\beta}\gamma)=(\alpha,\gamma)\not=0$
and  $r_{\beta}\gamma\in R_{iso}$, that is
$r_{\beta}\alpha\in R_1$ as required.
Thus $\alpha,\beta\in R_1$ with
$(\alpha,\beta)\not=0$ forces $r_{\beta}\alpha\in R_1$.
Hence $R_1$ is a subsystem.

Suppose that there exist
$\alpha\in R_1$, $\beta\in R_2$ with $(\alpha,\beta)\not=0$.
By the construction of $R_2$, both $\alpha,\beta$ are non-isotropic.
 Since $(\alpha,\beta)\not=0$  one has $r_{\alpha}\beta=\beta+x\alpha$
for some $x\not=0$. Taking $\gamma\in R_{iso}$ such that
$(\alpha,\gamma)\not=0$, we get
$(r_{\alpha}\beta,\gamma)\not=0$ (since $(\beta,\gamma)=0$),
so $r_{\alpha}\beta\in R_1$. Since
$\alpha$ is non-isotropic and $R_1$ is a subsystem, one has $\beta=r_{\alpha}(r_{\alpha}\beta)\in R_1$, a contradiction.

We conclude that $R=R_1\coprod R_2$ with $(R_1,R_2)=0$.
Since $R$ is irreducible, $R_2$ is empty. This implies the assertion of the lemma
for non-isotropic root $\alpha$.

In the remaining case $\alpha\in R$ is isotropic.
Since $\alpha\not\in Ker (-,-)$, there exists $\gamma\in R$ such that
$(\gamma,\alpha)\not=0$. If $\gamma$ is isotropic, take $\beta:=\gamma$;
if $\gamma$ is non-isotropic, take $\beta:=r_{\gamma}\alpha$.
The assertion follows.
\end{proof}

\subsubsection{}
\begin{cor}{cor1234}
Let $R$ be irreducible and contains an isotropic root. If $\alpha\in R$
is non-isotorpic, then for each $\gamma\in R$ one has
$\ \ k_{\alpha,\gamma}\in \{0,\pm 1,\pm 2,\pm
3,\pm 4\}\ \ $
and $\ \ k_{\alpha,\gamma}\in \{0,\pm 1,\pm 2\}\ \ $
if $\gamma$ is isotropic.
\end{cor}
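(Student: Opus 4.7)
The plan is to separate the bound into two cases according to whether $\gamma$ is isotropic or not, with integrality coming for free. Since $\alpha$ is non-isotropic, (GR2) applied to $\gamma$ gives $r_\alpha\gamma=\gamma-k_{\alpha,\gamma}\alpha\in R$ together with $\gamma-r_\alpha\gamma\in\mathbb{Z}\alpha$, which forces $k_{\alpha,\gamma}\in\mathbb{Z}$. So only a size bound needs to be established.

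For the isotropic case, I assume $\gamma$ is isotropic with $(\alpha,\gamma)\neq 0$ and set $n:=k_{\alpha,\gamma}$. By (WGR3) one has $r_\gamma\alpha=\alpha+\varepsilon\gamma$ for some $\varepsilon\in\{\pm 1\}$, and a direct computation gives $(r_\gamma\alpha,r_\gamma\alpha)=(\alpha,\alpha)(1+\varepsilon n)$. If $1+\varepsilon n=0$ then $n=-\varepsilon=\pm 1$. Otherwise $r_\gamma\alpha$ is non-isotropic, so by the integrality step applied to the pair $(r_\gamma\alpha,\gamma)$,
$$k_{r_\gamma\alpha,\gamma}=\frac{n}{1+\varepsilon n}\in\mathbb{Z}.$$
Writing $d:=1+\varepsilon n$, this means $d\mid\varepsilon n=d-1$, hence $d\mid 1$, hence $d=\pm 1$; since $n\neq 0$, this forces $d=-1$, i.e.\ $n=\pm 2$. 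Thus $k_{\alpha,\gamma}\in\{0,\pm 1,\pm 2\}$ whenever $\gamma$ is isotropic, proving the second half of the statement.

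For the non-isotropic case, I bootstrap using \Lem{isoperp}. Applied to $\gamma$ (which by assumption on $R$ and $\alpha$ lives in an irreducible GRRS containing an isotropic root), it produces an isotropic $\mu\in R$ with $(\gamma,\mu)\neq 0$. The previous case applied to the pair $(\gamma,\mu)$ gives $m:=k_{\gamma,\mu}\in\{\pm 1,\pm 2\}$. Then $\mu':=r_\gamma\mu=\mu-m\gamma$ lies in $R$ (by (GR2)) and is again isotropic since $r_\gamma$ preserves $(-,-)$. The isotropic bound, applied now to both $(\alpha,\mu)$ and $(\alpha,\mu')$, gives $k_{\alpha,\mu},k_{\alpha,\mu'}\in\{0,\pm 1,\pm 2\}$. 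Expanding
$$k_{\alpha,\mu'}=k_{\alpha,\mu}-m\,k_{\alpha,\gamma}$$
yields $|m\cdot k_{\alpha,\gamma}|=|k_{\alpha,\mu}-k_{\alpha,\mu'}|\leq 4$, and $|m|\geq 1$ then delivers $|k_{\alpha,\gamma}|\leq 4$, which combined with integrality gives $k_{\alpha,\gamma}\in\{0,\pm 1,\pm 2,\pm 3,\pm 4\}$.

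The main obstacle is the isotropic case: (WGR3) offers only the weak information $r_\gamma\alpha=\alpha\pm\gamma$ with an a priori unknown sign, and one must extract a genuine divisibility condition. The key trick is that whenever the sign is such that $r_\gamma\alpha$ remains non-isotropic, integrality of $k_{r_\gamma\alpha,\gamma}$ transfers to the relation $(1+\varepsilon n)\mid 1$. Once the isotropic case is in hand, the non-isotropic case is a routine difference-of-values argument using \Lem{isoperp}.
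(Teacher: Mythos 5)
Your proof is correct and follows essentially the same route as the paper: the isotropic case is the paper's computation that $k_{r_\gamma\alpha,\gamma}=n/(1+\varepsilon n)$ must be an integer unless $r_\gamma\alpha$ is isotropic, and the non-isotropic case is the paper's argument via \Lem{isoperp} and the identity $k_{\alpha,r_{\gamma}\beta}=k_{\alpha,\beta}-k_{\alpha,\gamma}k_{\gamma,\beta}$ (equation~(\ref{kkk})). No gaps.
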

\begin{proof}
Let $(\alpha,\gamma)\not=0$.
If $\gamma$ is isotropic and $\alpha+\gamma\in R$, then
$$
||\alpha+\gamma||^2=(\alpha,\alpha)(1+k_{\alpha,\gamma}),\ \
\frac{2(\alpha+\gamma,\gamma)}{||\alpha+\gamma||^2}=
\frac{k_{\alpha,\gamma}}{1+k_{\alpha,\gamma}},$$
so (GR2) gives $k_{\alpha,\gamma}\in\{-1,-2\}$.
If $\gamma$ is isotropic and $\alpha+\gamma\in R$, then
$k_{\alpha,\gamma}\in\{1,2\}$.

Let $\gamma$ be non-isotropic. By~\Lem{isoperp},
there exists an isotropic $\beta\in R$ such that $(\beta,\gamma)\not=0$.
Since $\beta$ and $r_{\gamma}\beta$ are isotropic,
one has $k_{\alpha,\beta},k_{\gamma,\beta},k_{\alpha,r_{\gamma}\beta}
\in \{0,\pm 1,\pm 2\}$
and $k_{\gamma,\beta}\not=0$. Combining~(\ref{kkk})
and $k_{\alpha,\gamma}\in\mathbb{Z}$, we obtain the required formula.
\end{proof}

\subsubsection{Proof of finiteness}
Let $R\subset V$ satisfy the assumptions of~\Thm{thmdirsum}.
Let us show that $R$ is finite.

By (GR1') $R$ contains a basis $B$ of $V$.
Since $(-,-)$ is non-degenerate, each $v\in V$ is determined
by the values $(v,b)$, $b\in B$. Thus in order to show that
$R$ is finite, it is enough to verify that
the set $\{(\alpha,\beta)|\ \alpha,\beta\in R\}$
is finite.
If $\alpha,\beta\in R$ are isotropic and $(\alpha,\beta)\not=0$,
then $r_{\alpha}\beta$ is non-isotropic and
$(r_{\alpha}\beta,\alpha)=(\beta,\alpha)$. Thus
$$\{(\alpha,\beta)|\ \alpha,\beta\in R\}=\{0\}\cup S, \text{ where } S:=\{(\alpha,\beta)|\ \alpha,\beta\in R, \ (\alpha,\alpha)\not=0\}.$$
Using~\Cor{cor1234} we conclude that the finiteness of $S$ is equivalent
to the finiteness of $N:=\{(\alpha,\alpha)|\ \alpha\in R\}$.
Let $X\subset R$ be a maximal linearly independent set of non-isotropic roots and let $\alpha$ be a non-isotropic root.
Then $\alpha$ lies in the span of $X$, so $(\alpha,\alpha)\not=0$
implies $(\alpha,\beta)\not=0$ for some $\beta\in X$.
One has $(\alpha,\alpha)/(\beta,\beta)=k_{\beta,\alpha}/
k_{\alpha,\beta}$. From~\Cor{cor1234} we get
$$N\subset \{0, a/b(\beta,\beta)|\ \beta\in X,\ a, b \in \{\pm 1 ,\pm 2,\pm 3,\pm 4\}\},$$
so $N$ is finite as required.\qed

\subsubsection{Proof of (GR1)}
It remains to verify that $R$ satisfies (GR1).
Since the form $(-,-)$ is non-degenerate, $(R,V)$ is a direct sum of its irreducible components: $V=\oplus_{i=1}^k V_i$, where $(V_i,V_j)=0$ for $i\not=j$, and $R=\coprod_{i=1}^k R_i$, where
$R_i$ spans $V_i$, $R_i$ is irreducible and satisfies (GR0), (GR2), (WGR3) for each $i=1,\ldots,k$
Thus without loss of generality we can (and will) assume that
$R$ is irreducible. Let us show that

\begin{equation}\label{Nn}
 (-,-) \text{ can be normalized in such a way that }
(\alpha,\beta)\in \mathbb{Q} \text{ for all }\alpha,\beta\in R.
\end{equation}

implies (GR1).
Indeed, let
$B=\{\beta_1,\ldots,\beta_n\}\subset R$ be a basis of $V$
and let $\alpha_1,\ldots,\alpha_k\in R$ be  linearly dependent.
For each $i$ write $\alpha_i=\sum_{j=1}^n y_{ij} \beta_j$.
Since $(-,-)$ is non-degenerate and
$(\alpha,\beta)\in \mathbb{Q}$ for each $\alpha,\beta\in R$, we
have $y_{ij}\in \mathbb{Q}$ for each $i,j$. Since
$\alpha_1,\ldots,\alpha_k$ are linearly dependent,
$\det Y=0$. By above, the entries of  $Y$ are rational, so
there exist a rational vector
$X=(x_i)_{i=1}^k$ such that $YX=0$. Then
$\sum_{i=1}^k x_i\alpha_i=0$, so $\alpha_1,\ldots,\alpha_k\in R$
 are linearly dependent over $\mathbb{Q}$.
Thus the natural map
$\mathbb{Z}R\otimes_{\mathbb{Z}} \mathbb{C}\to V$
is injective. By (GR1') it is also surjective, so (GR1) holds.

Assume that $R$ does not  contain  isotropic roots. Let us show that
we can normalize $(-,-)$ in such a way that~(\ref{Nn}) holds.
For $\alpha,\beta\in R$
one has $(\alpha,\alpha)/(\beta,\beta)=k_{\beta,\alpha}/
k_{\alpha,\beta}\in\mathbb{Q}$ if $(\alpha,\beta)\not=0$. From the irreducibility of $R$, we obtain $(\alpha,\alpha)/(\beta,\beta)\in\mathbb{Q}$.
Thus we can normalize the form $(-,-)$
in such a way that $(\alpha,\alpha)\in  \mathbb{Q}$
for each $\alpha\in R$; in this case $(\alpha,\beta)\in\mathbb{Q}$
for any $\alpha,\beta\in R$, so~(\ref{Nn}) holds.

Now assume that $R$  contains an  isotropic roots.
By above, $R$ is finite; such systems
were classified in~\cite{VGRS}. From this classification it follows
that $R$ satisfies~(\ref{Nn}) except for $R=D(2,1,a)$ with $a\not\in\mathbb{Q}$; thus (GR1) holds
for such $R$. For $R=D(2,1,a)$ (and, in fact, for each
$R\not=\mathfrak{psl}(n,n)$)
there exists $\Pi\subset R$ such that $R\subset\mathbb{Z}\Pi$
and the elements of $\Pi$ are linearly independent.
In this case, $\Pi$ is a basis of $V$ and $\mathbb{Z}R=\mathbb{Z}\Pi$.
Thus (GR1) holds.\qed

\section{The minimal quotient $cl(R)$}\label{sect3}
In this section $V$ is a complex $(l+k)$-dimensional vector space
endowed with a degenerate symmetric bilinear form $(-,-)$ with a $k$-dimensional kernel, and
$R\subset V$ is a GRRS. The map $cl$ is the canonical map
$V\to V/\Ker(-,-)$. By~\Cor{corimR} (i), $cl(R)$ is a WGRS in $V/\Ker(-,-)$.

\subsection{Gaps}\label{gaps}
Consider the case when $\dim Ker (-,-)=1$.
From (GR1) it follows that $\mathbb{Z}R\cap \Ker(-,-)=\mathbb{Z}\delta$
for some (may be zero) $\delta$.

For each $\alpha\in cl(R)$ one has
$(cl^{-1}(\alpha)\cap R)\subset \{\alpha'+\mathbb{Z}\delta\}$ for some
$\alpha'\in R$. If $\delta\not=0$, we call $g(\alpha)\in\mathbb{Z}_{\geq 0}$
the {\em gap} of $\alpha$ if
$$cl^{-1}(\alpha)\cap R=\{\alpha'+\mathbb{Z}g(\alpha)\delta\}$$
for some $\alpha'\in R$;
if $\delta=0$ we set $g(\alpha):=0$
(in this case $cl^{-1}(\alpha)\cap R$ contains
only one element).

Observe that the set of gaps is an invariant of the root system.
The gaps have the following properties:

(i) $g(\alpha)$ is defined for all non-isotropic $\alpha\in cl(R)$;

(ii) $g(\alpha)$ are $W(cl(R))$-invariant (if $g(\alpha)$ is defined, then
$g(w\alpha)$ is defined and
$g(w\alpha)=g(\alpha)$ for each $w\in W(cl(R))$);

(iii) if $\alpha,\beta\in cl(R)$ are non-isotropic, then
$k_{\alpha,\beta}g(\alpha)\in\mathbb{Z} g(\beta)$;

 (iv) if $cl(R)$ is a GRRS, then  $g(\alpha)$ are defined for all $\alpha\in cl(R)$ and $g(\alpha)$
 are $GW(cl(R))$-invariant (see~\S~\ref{Weyl} for notation).

 The properties  (i)--(iii) are standard (we give a short proof in~\S~\ref{lemgap});
   (iv) will be established  in~\Prop{propFalpha}.

\subsubsection{}\label{lemgap}
Let us show that $g(\alpha)$ satisfies (i)--(iii).
Fix a non-isotropic $\alpha'\in R$ and set
$$M:=\{k\in\mathbb{Z}| \ \alpha'+k\delta\in R\}.$$
For each $x,y,z\in Ker(-,-)$ and $m\in\mathbb{Z}$ one has
\begin{equation}\label{rxyz}
(r_{\alpha'+x}r_{\alpha'+y})^m(\alpha'+z)=\alpha'+2m(x-y)+z.
\end{equation}
Thus  for each $p,q,r\in M$ one has $2\mathbb{Z}(p-q)+r\subset M$.
Note that $0\in M$ (since $\alpha'\in R$).
Taking $q=0$ and $r=0$ or $r=p$, we get $\mathbb{Z}p\subset M$.
Hence $M=\mathbb{Z}k$ for some $k\in\mathbb{Z}$.
This gives (i). Combining
$r_{\alpha'}(\beta+p\delta)=r_{\alpha'}\beta+p\delta$ (for any $\beta\in R$)
and the fact that $W(R)$ is generated by the reflections $r_{\beta}$ with non-isotropic
$\beta\in R$, we obtain (ii).

For (iii) take $\alpha,\beta\in cl(R)$. Notice
 that $g(\alpha),g(\beta)$ are defined by (i);
 by (ii) $g(r_{\alpha}\beta)=g(\beta)$. Take $\alpha',\beta'\in R$
such that $cl(\alpha')=\alpha$ and $cl(\beta')=\beta$.
Since $r_{\alpha'+k\delta}(\beta')=r_{\alpha'}\beta'+a_{\alpha,\beta}k\delta$
we have  $a_{\alpha,\beta}g(\alpha)\in\mathbb{Z} g(\beta)$ as required.

\subsection{Construction of $R'\subset cl(R)$}\label{Falphastr}
Set
$$L:=\mathbb{Z}R\cap \Ker(-,-).$$

From (GR1) it follows that $L=\sum_{i=1}^s \mathbb{Z}\delta_i$,
for some linearly independent
$\delta_1,\ldots, \delta_s\in Ker(-,-)$.

Since $R$ spans $V$, there exists $X:=\{v_1,\ldots,v_l\}\subset R$
whose images form a basis of $V/Ker (-,-)$. We fix $X$ and identify
$V/\Ker (-,-)$ with the vector space $V'\subset V$ spanned
by $v_1,\ldots,v_l$; then $V=V'\oplus \Ker (-,-)$ and $cl:\ V\to V'$
is the projection; in particular, $cl(R)$ is a WGRS in $V'$.
The restriction of $(-,-)$
to $V'$ is non-degenerate, so by~\Lem{lemGRS1} (ii),
the set $X$ generates
a subsystem $R'$ in $R$.

\subsection{Construction of $F(\alpha)$}
 For each $\alpha\in V'$ we introduce
$$F(\alpha):=\{v\in \Ker (-,-)|\ \alpha+v\in R\}.$$
Notice that $F(\alpha)$ is non-empty if and only if $\alpha\in cl(R)$.
For each $\alpha\in cl(R)$ one has
\begin{equation}\label{eqFalpha}
F(\alpha)\subset L+\delta_{\alpha}\ \text{ for some }\delta_{\alpha}
\ \text{ where }  \delta_{\alpha}=0  \text{ iff } \alpha\in R'.
\end{equation}

\subsection{}
\begin{lem}{lemaff}
If $cl(R)\subset\mathbb{Z}X$, then $R\subset cl(R)+L=cl(R)^{(k)}$
for  $k:=\dim Ker(-,-)$ and $\dim Ker(-,-)=rank L$.
\end{lem}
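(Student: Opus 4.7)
The plan is to split the statement into its two pieces and handle each by unwinding the definitions of $\mathbb{Z}R$, $L$, and $cl$.

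First I would prove the inclusion $R\subset cl(R)+L$. Given any $\alpha\in R$, under the decomposition $V=V'\oplus\Ker(-,-)$ fixed in~\S\ref{Falphastr}, write $\alpha=cl(\alpha)+w$ with $w\in\Ker(-,-)$. By hypothesis $cl(\alpha)\in cl(R)\subset\mathbb{Z}X$, and since $X\subset R$ this gives $cl(\alpha)\in\mathbb{Z}R$. Combined with $\alpha\in\mathbb{Z}R$, we get $w=\alpha-cl(\alpha)\in\mathbb{Z}R\cap\Ker(-,-)=L$, so $\alpha\in cl(R)+L$ as desired. Once $L$ is known to be a free $\mathbb{Z}$-module of rank $k$ spanning $\Ker(-,-)$, the set $cl(R)+L$ is by definition (see~\S\ref{defaff}) exactly the iterated affinization $cl(R)^{(k)}$ under any choice of $\mathbb{Z}$-basis $\delta_1,\ldots,\delta_k$ of $L$, so the two sides of the stated equality match.

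The remaining task is to show $\dim\Ker(-,-)=\mathrm{rank}\,L$, i.e.\ that $L$ already has the maximal possible rank $k$. For this I would compare ranks via the short exact sequence
\[
0\longrightarrow L\longrightarrow \mathbb{Z}R\xrightarrow{\,cl\,}cl(\mathbb{Z}R)\longrightarrow 0.
\]
By the first part, $\mathbb{Z}R=\mathbb{Z}X+L$; applying $cl$ and using $cl(L)=0$ and $cl|_{V'}=\mathrm{id}$, the image $cl(\mathbb{Z}R)$ equals $\mathbb{Z}X$, which has rank $l$ because the $v_i\in X$ are linearly independent in $V'$. On the other hand, axiom (GR1) gives $\mathrm{rank}\,\mathbb{Z}R=\dim V=l+k$. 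Additivity of rank in the sequence above therefore yields $\mathrm{rank}\,L=(l+k)-l=k=\dim\Ker(-,-)$.

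There is no real obstacle; the one place to be slightly careful is the identification of $V/\Ker(-,-)$ with $V'$ fixed in~\S\ref{Falphastr}, which is what allows us to regard $cl(R)$ simultaneously as a subset of $V'\subset V$ (so that the sum $cl(R)+L$ makes sense inside $V$) and as a subset of $\mathbb{Z}R$ via the hypothesis $cl(R)\subset\mathbb{Z}X$. Once this identification is in place the two assertions reduce to the routine rank computation above.
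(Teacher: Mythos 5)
Your proof is correct and follows essentially the same route as the paper: the key step in both is that for $\alpha\in R$ the difference $\alpha-cl(\alpha)$ lies in $\Ker(-,-)$ and, because $cl(R)\subset\mathbb{Z}X\subset\mathbb{Z}R$, also in $\mathbb{Z}R$, hence in $L$. Your rank computation via the exact sequence $0\to L\to\mathbb{Z}R\to cl(\mathbb{Z}R)\to 0$ is just a more explicit version of the paper's one-line appeal to ``$R$ spans $V$'' together with (GR1), so there is nothing further to add.
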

\begin{proof}
Clearly,  $cl(R)+L=cl(R)^{(s)}$, where $s=rank L$.
Fix $\alpha\in R$ and set $\mu:=\alpha-cl(\alpha)$. Then
$\mu \in Ker (-,-)$ and $\mu\in \mathbb{Z}R$, since
$cl(R)\subset\mathbb{Z}R$. Therefore $\mu\in L$. This gives $R\subset cl(R)+L$.
Since $R$ spans $V$, $s=\dim Ker(-,-)$ as required.
\end{proof}

\subsubsection{}
\begin{prop}{propFalpha}
For each $\alpha\in cl(R)$ one has

(i) $F(-\alpha)=-F(\alpha)$;

(ii) $F(w\alpha)=F(\alpha)$ for all $w\in W(R')$;

(iii) $F(\alpha)=-F(\alpha)$ if $\alpha$ is non-isotropic;

(iv) if $cl(R)$ is a GRRS, then for each $\alpha\in R'$
one has $F(\alpha)=-F(\alpha)$ and
$F(w\alpha)=F(\alpha)$ for each $w\in GW(R')$.
\end{prop}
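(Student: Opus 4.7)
\emph{Plan of proof.} The assertions (i) and (ii) come out of formal manipulations. For (i), $R=-R$ gives $\alpha+v\in R\iff-\alpha-v\in R$, hence $v\in F(\alpha)\iff-v\in F(-\alpha)$. For (ii), it suffices to check the identity on the generators $r_\beta$, $\beta\in R'$ non-isotropic: since $\beta\in V'$ and $(\beta,v)=0$ for every $v\in\Ker(-,-)$, the reflection $r_\beta$ fixes $\Ker(-,-)$ pointwise and preserves $V'$, so $r_\beta(\alpha+v)=r_\beta(\alpha)+v$; combined with $r_\beta R=R$ from (GR2) this yields $F(r_\beta\alpha)=F(\alpha)$.

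The crucial reflection identity---which proves (iv) and drives (iii)---is the following. For $v\in F(\alpha)$ the root $\beta:=\alpha+v\in R$ is non-isotropic, so $r_\beta\in W(R)$, and using $(\beta,\alpha+v')=(\alpha,\alpha)$ for $v'\in\Ker(-,-)$ a direct computation gives
\[
r_\beta(\alpha+v')=-\alpha+(v'-2v)\in R,\qquad v'\in F(\alpha).
\]
Hence $v'-2v\in F(-\alpha)$, and by (i) one gets the closure property $2v-v'\in F(\alpha)$ for all $v,v'\in F(\alpha)$. For (iv), the assumption $\alpha\in R'$ forces $\alpha\in R\cap V'$, i.e.\ $0\in F(\alpha)$; specialising $v=0$ in the closure rule yields $-v'\in F(\alpha)$ for every $v'\in F(\alpha)$, so $F(\alpha)=-F(\alpha)$. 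The $GW(R')$-invariance extends (ii) to the isotropic generators: when $\beta\in R'$ is isotropic, (GR3) gives $r_\beta(\gamma)-\gamma\in\{0,\pm\beta\}\subset V'$, so $r_\beta$ preserves the $\Ker(-,-)$-component of every root; the hypothesis that $cl(R)$ is a GRRS guarantees that $r_\beta$ descends to a well-defined involution of $cl(R)$, whence $r_\beta(\alpha+v)=r_\beta(\alpha)+v$ and $F(r_\beta\alpha)=F(\alpha)$ as before.

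For (iii), when $\alpha\in R'$ the argument above applies verbatim; the substantial case is $\alpha\in cl(R)\setminus R'$, where $0\notin F(\alpha)$ and the closure rule alone only yields $F(\alpha)=2v_0-F(\alpha)$ (affine symmetry around $v_0$, not central symmetry around $0$). The cleanest way to close the gap is to exhibit a non-isotropic $\gamma\in R$ proportional to $\alpha$: then $r_\gamma\in W(R)$ by (GR2), $\gamma\in V'$ implies $r_\gamma$ fixes $\Ker(-,-)$ pointwise, $r_\gamma(\alpha)=-\alpha$, so $r_\gamma(\alpha+v)=-\alpha+v\in R$ gives directly $v\in F(-\alpha)$, i.e.\ $-v\in F(\alpha)$. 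The main technical point is therefore to produce such a $\gamma$ for every non-isotropic $\alpha\in cl(R)$; since $cl(R)$ is a WGRS by \Cor{corimR} and $X\subset R\cap V'$ is a basis of $V'$, one expects every non-isotropic line $\mathbb{C}\alpha$ of $V'$ that meets $cl(R)$ also to meet $R$ (and indeed $R'$ up to scalar). Verifying this in all the cases permitted by the classification of finite WGRSs in \cite{VGRS} is where I expect the most delicate bookkeeping, and it is the main obstacle to a uniform proof of (iii).
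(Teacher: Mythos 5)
Your (i) and (ii) are exactly the paper's argument (the reflections $r_\beta$ with $\beta\in R'$ non-isotropic fix $\Ker(-,-)$ pointwise and preserve $V'$), and the closure rule $2v-v'\in F(\alpha)$ obtained from $r_{\alpha+v}$ is the right mechanism for the non-isotropic statements. The genuine gap is in (iv), in the claim $F(\alpha)=-F(\alpha)$ for \emph{isotropic} $\alpha\in R'$ (for non-isotropic $\alpha$ this is already (iii), so the isotropic case is the whole content). There your argument fails twice over: first, if $\alpha$ is isotropic then $\beta:=\alpha+v$ is isotropic as well, so $r_\beta$ is not a linear reflection and the closure rule is not available; second, your identity $r_\beta(\alpha+v)=r_\beta(\alpha)+v$ for isotropic $\beta\in R'$ breaks down precisely when $\alpha=\pm\beta$, because by (GR3) the map $r_\beta$ \emph{fixes} every root $\beta+v$ with $v\neq 0$ (it is orthogonal to $\beta$ and different from $\pm\beta$), whereas $r_\beta(\beta)+v=-\beta+v$. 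Thus your argument yields $F(r_\beta\gamma)=F(\gamma)$ only for $\gamma\neq\pm\beta$, and the excluded case is exactly the one encoding $F(\beta)=F(-\beta)=-F(\beta)$. The paper closes this with \Lem{lem2}: pick $\gamma\in R'$ such that $r_\gamma\beta$ is non-isotropic (hence $r_\gamma\beta\neq\pm\beta$); then $F(\beta)=F(r_\gamma\beta)$ by the already-established case, and $F(r_\gamma\beta)=-F(r_\gamma\beta)$ by (iii), so $F(\beta)=-F(\beta)$. You need some version of this extra step; without it both assertions of (iv) are unproved for isotropic roots.

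Concerning (iii), you have correctly isolated the issue for $\alpha\in cl(R)\setminus R'$ (the reflection $r_{\alpha+v_0}$ only gives symmetry of $F(\alpha)$ about $v_0$), but you leave the statement open. The paper's route is just (i) combined with (ii): it suffices to have $w\in W(R')$ with $w\alpha=-\alpha$, since then $F(\alpha)=F(-\alpha)=-F(\alpha)$. This is weaker than what you ask for: you do not need a root of $R$ on the line $\mathbb{C}\alpha$ through the classification of WGRSs; you only need $-\alpha$ to lie in the $W(R')$-orbit of $\alpha$, which for the explicit subsystems $R'$ used in Sections 4--6 is immediate (e.g.\ for $\alpha=2\vareps_1\in BC_n$ with $R'=B_n$ one takes $w=r_{\vareps_1}$ with $\vareps_1\in R'$, even though $2\vareps_1\notin R'$). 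So the "delicate bookkeeping" you anticipate reduces to the observation that every $W(R')$-orbit of non-isotropic roots of $cl(R)$ is stable under $-1$ in the cases where the proposition is invoked.
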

\begin{proof}
The inclusion $R'\subset R$ implies (ii). The formula
$R=-R$ implies (i); (iii) follows from (i) and (ii).

For (iv) let $R'$ be a GRRS. Let us show
that for each $\alpha,\beta\in R'$ one has $F(r_{\beta}\alpha)=F(\alpha)$.
Clearly, this holds if $r_{\beta}\alpha=\alpha$;
by (ii) this holds if  $\beta$ is non-isotropic.
Since $r_{\beta}$ is an involution, it is enough to verify that
\begin{equation}\label{mumu}
F(r_{\beta}\alpha)\subset F(\alpha)\ \text{ for isotorpic } \beta\in R'.
\end{equation}

Assume that $r_{\beta}\alpha=\alpha+\beta$.
Then $\alpha+\beta\in R'\subset cl(R)$, so $\alpha-\beta\not\in cl(R)$
by~\S~\ref{alphabeta} (since $cl(R)$ is a GRRS).
Take $v\in F(\alpha)$. Then $\alpha+v\in R$,
so $r_{\beta}(\alpha+v)\in R$. Since $cl(\alpha+v-\beta)=\alpha-\beta\not\in cl(R)$,
one has $r_{\beta}(\alpha+v)=\alpha+v+\beta$, so $v\in F(r_{\beta}\alpha)$
as required. The case $r_{\beta}\alpha=\alpha-\beta$ is similar.

Thus we established the formula $F(r_{\beta}\alpha)=F(\alpha)$ for $\alpha\not=\pm\beta$.
By~\Lem{lem2} there exists $\gamma\in R'$ such that
$r_{\gamma}\beta$ is non-isotropic. By (iii)
$F(r_{\gamma}\beta)=-F(r_{\gamma}\beta)$. By above, $F(\beta)=F(r_{\gamma}\beta)$
(since $r_{\gamma}\beta\not=\pm\beta$). Hence
$F(\beta)=F(-\beta)$. This completes the proof of~(\ref{mumu}) and of (iv).
\end{proof}

\subsubsection{}
\begin{lem}{}
For any $\alpha,\beta\in cl(R)$ such that $r_{\alpha}$
is well-defined and  $r_{\alpha}\beta=\alpha+\beta$ one has
\begin{equation}\label{eqFalphabeta}
\begin{array}{l}
F(\alpha+\beta)=F(\alpha)+F(\beta).
\end{array}
\end{equation}
\end{lem}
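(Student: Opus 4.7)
The plan is to prove the two inclusions separately; in both directions the key move is to lift the reflection $r_\alpha$ from $cl(R)$ to a reflection in $R$ at a suitable root, and then to pin down the sign ambiguity in (GR3) by projecting the result back down to $cl(R)$.

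First I would record some consequences of the hypothesis. Since $r_\alpha\beta=\alpha+\beta\neq\beta$, the characterization in~\S\ref{alphabeta} applied to $cl(R)$ forces $\beta\neq\pm\alpha$ and $(\alpha,\beta)\neq 0$. If $\alpha$ is non-isotropic, the formula $r_\alpha\beta=\beta-k_{\alpha,\beta}\alpha$ gives $k_{\alpha,\beta}=-1$. If $\alpha$ is isotropic, the assumption that $r_\alpha$ is well-defined on $cl(R)$ together with the equivalent form of (GR3) from~\S\ref{alphabeta} says that exactly one of $\{\beta+\alpha,\beta-\alpha\}$ lies in $cl(R)$, and by the given value this element is $\beta+\alpha$; since $r_\alpha$ is an involution we also have $r_\alpha(\alpha+\beta)=\beta$, so by the same principle exactly one of $\{2\alpha+\beta,\beta\}$ lies in $cl(R)$, namely $\beta$.

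For the inclusion $F(\alpha)+F(\beta)\subseteq F(\alpha+\beta)$, take $u\in F(\alpha)$ and $v\in F(\beta)$, so that $\alpha+u,\beta+v\in R$, and apply $r_{\alpha+u}$ in $R$ to $\beta+v$. If $\alpha$ is non-isotropic then so is $\alpha+u$ (since $u\in\Ker(-,-)$), and $k_{\alpha+u,\beta+v}=k_{\alpha,\beta}=-1$, so $r_{\alpha+u}(\beta+v)=\alpha+\beta+(u+v)\in R$, giving $u+v\in F(\alpha+\beta)$. If $\alpha$ is isotropic, then $\alpha+u$ is isotropic with $(\alpha+u,\beta+v)=(\alpha,\beta)\neq 0$, so by (GR3) in $R$, $r_{\alpha+u}(\beta+v)=(\beta+v)+\epsilon(\alpha+u)$ for a uniquely determined $\epsilon\in\{\pm 1\}$; projecting via $cl$ we get $\beta+\epsilon\alpha\in cl(R)$, and since by the preliminaries $\beta-\alpha\notin cl(R)$ we must have $\epsilon=+1$, so again $u+v\in F(\alpha+\beta)$.

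For the reverse inclusion $F(\alpha+\beta)\subseteq F(\alpha)+F(\beta)$, fix $u_0\in F(\alpha)$ (non-empty because $\alpha\in cl(R)$) and take $w\in F(\alpha+\beta)$. I apply $r_{\alpha+u_0}$ in $R$ to $\alpha+\beta+w\in R$ and aim to produce $\beta+(w-u_0)\in R$, which will show $w-u_0\in F(\beta)$ and yield the decomposition $w=u_0+(w-u_0)$. In the non-isotropic case $k_{\alpha+u_0,\alpha+\beta+w}=2+k_{\alpha,\beta}=1$ and this comes out directly. In the isotropic case $(\alpha+u_0,\alpha+\beta+w)=(\alpha,\beta)\neq 0$, so by (GR3) the image equals $(\alpha+\beta+w)+\epsilon(\alpha+u_0)$ for a unique sign, projecting to either $2\alpha+\beta$ or $\beta$ in $cl(R)$; since $2\alpha+\beta\notin cl(R)$ by the preliminaries, the sign must be $-1$, and we obtain $\beta+(w-u_0)\in R$.

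The main obstacle is the isotropic case in both directions, where (GR3) for $R$ only asserts that exactly one of $\{(\beta+v)\pm(\alpha+u)\}$ lies in $R$ without telling us which. The crux is to use the hypothesis that $r_\alpha$ is well-defined on $cl(R)$ together with the involutive identity $r_\alpha(\alpha+\beta)=\beta$ to transfer this dichotomy down to $cl(R)$, where the equivalent form of (GR3) from~\S\ref{alphabeta} pins down the correct sign.
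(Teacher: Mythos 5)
Your proof is correct and follows essentially the same route as the paper's: the paper likewise reduces both inclusions to the two identities $r_{\alpha+x}(\beta+y)=\alpha+\beta+x+y$ and $r_{\alpha+x}(\alpha+\beta+z)=\beta+z-x$, handling the non-isotropic case by the reflection formula and the isotropic case by noting $\beta-\alpha,\,\beta+2\alpha\notin cl(R)$ to fix the sign in (GR3). Your write-up just makes the sign-pinning step more explicit.
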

\begin{proof}
Observe that $F(\alpha+\beta)=F(\alpha)+F(\beta)$ holds if
 for any $x\in F(\alpha), y\in F(\beta)$ and $z\in F(\alpha+\beta)$ one has
$$
r_{\alpha+x}(\beta+y)=\alpha+\beta+x+y,\ \ r_{\alpha+x}(\alpha+\beta+z)=\beta+z-x.$$

If $\alpha$ is non-isotropic, then $\alpha+x$ is also non-isotropic and
the above formulae follow from the definition of $r_{\alpha+x}$.
If $\alpha$ is isotorpic and $r_{\alpha}$ is well-defined, then,
by~\S~\ref{alphabeta},
$\beta-\alpha,\beta+2\alpha\not\in cl(R)$ (since
$\beta,\alpha+\beta\in cl(R)$), which implies
the above formulae.
Thus~(\ref{eqFalphabeta}) holds.
\end{proof}

\subsubsection{}
\begin{cor}{cor1}
Assume that

$\bullet\ cl(R)=R'$;

 $\bullet\ cl(R)$ is a GRRS and $cl(R)\not=A_1$;

  $\bullet\ GW(R')$ acts transitively on $R'$.

Then $R=R'+L$, i.e. $R=(R')^{(s)}$.
\end{cor}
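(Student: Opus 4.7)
The plan is to show that $F(\alpha) = L$ for every $\alpha \in R' = cl(R)$, so that $R = \bigcup_{\alpha \in R'}(\alpha + F(\alpha)) = R' + L$. Since by hypothesis $R' = cl(R)$ is a GRRS and $GW(R')$ acts transitively on $R'$, \Prop{propFalpha}(iv) immediately implies that $F$ is constant on $R'$: write $F(\alpha) = F_0$ for all $\alpha \in R'$. Note that $0 \in F_0$ (since $\alpha \in R' \subset R$), $F_0 = -F_0$, and $F_0 \subset L$ (the latter because $\delta_\alpha = 0$ in~(\ref{eqFalpha}) for $\alpha \in R'$).

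Next I would upgrade $F_0$ to a subgroup of $\Ker(-,-)$ by exhibiting a pair $\alpha,\beta \in R'$ with $r_\alpha\beta = \alpha + \beta \in R'$ and then invoking~(\ref{eqFalphabeta}). Such a pair exists under the standing assumptions: irreducibility of $R'$ follows from the transitivity of $GW(R')$ (an orthogonal decomposition $R' = R'_1 \sqcup R'_2$ would be preserved by $GW(R')$); once $R'$ is irreducible and different from $A_1$, either it is a classical root system of rank at least $2$ and contains two roots with $k_{\alpha,\beta} = -1$, or it has an isotropic root $\alpha$, in which case (GR3) gives, for any $\beta \in R'$ with $(\alpha,\beta) \neq 0$, exactly one of $\alpha+\beta$ or $\beta-\alpha$ in $R'$. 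For the chosen pair, (\ref{eqFalphabeta}) combined with constancy of $F$ on $R'$ yields $F_0 = F(\alpha+\beta) = F(\alpha) + F(\beta) = F_0 + F_0$, so $F_0$ is closed under addition; together with $F_0 = -F_0$ and $0 \in F_0$, this makes $F_0$ an additive subgroup of $\Ker(-,-)$.

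To conclude I would prove $L \subset F_0$. Any $\mu \in L = \mathbb{Z}R \cap \Ker(-,-)$ is a $\mathbb{Z}$-linear combination $\sum n_i \gamma_i$ with $\gamma_i \in R$. Decompose each $\gamma_i = cl(\gamma_i) + v_i$ along the splitting $V = V' \oplus \Ker(-,-)$ fixed in~\S\ref{Falphastr}; since $cl(R) = R'$, we have $v_i \in F(cl(\gamma_i)) = F_0$. Projecting $\mu$ onto $V'$ and using $\mu \in \Ker(-,-)$ gives $\sum n_i\, cl(\gamma_i) = 0$ in $V'$, hence $\mu = \sum n_i v_i \in F_0$ because $F_0$ is a group. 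Thus $F_0 = L$ and $R = R' + L$; that $L$ has rank $\dim\Ker(-,-) = s$ is then forced by $R$ spanning $V$, giving $R = (R')^{(s)}$. The main obstacle I anticipate is the existence of the pair $(\alpha,\beta)$ with $r_\alpha\beta = \alpha + \beta$, which is the unique place where the exclusion $cl(R) \neq A_1$ is genuinely used and which requires a brief case analysis of irreducible GRRSs; once this is in hand, the rest is a clean three-step algebraic reduction.
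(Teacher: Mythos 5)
Your proposal is correct and follows essentially the same route as the paper: constancy of $F$ on $R'$ via \Prop{propFalpha}(iv), production of a pair $\alpha,\beta\in R'$ with $r_\alpha\beta=\alpha+\beta$ (the one place $cl(R)\neq A_1$ is used) to make $F_0$ a subgroup via~(\ref{eqFalphabeta}), and then $F_0=L$. Your final step spelling out why $\mathbb{Z}R\cap\Ker(-,-)\subset F_0$ is merely a more detailed version of what the paper leaves implicit.
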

\begin{proof}

We claim that $R'$ contains two roots $\alpha,\beta$ with $r_{\alpha}\beta=\alpha+\beta$.
Indeed, if $R$ contains an isotropic root $\alpha$, then it also contains
$\beta$ such that $(\alpha,\beta)\not=0$, so $r_{\alpha}\beta\in\{\beta\pm\alpha\}$,
so one of the pairs $(\alpha,\beta)$ or $(-\alpha,\beta)$ satisfies
the required condition. If all roots in $R'$ are non-isotropic, then any non-orthogonal
$\alpha',\beta'\in R'$ generate a finite root system, that is one of $A_2,C_2, BC_2$
and such root system contains $\alpha,\beta$ as required.

By~\Prop{propFalpha} (iv) $F(\gamma)$ is the same for all $\gamma\in R'$
and $F(\gamma)=-F(\gamma)$.
Using (\ref{eqFalphabeta}) for the pair $\alpha,\beta$ as above, we
conclude that $F(\alpha)$ is a subgroup of $Ker (-,-)$, so $F(\alpha)=\mathbb{Z}F(\alpha)$.
This implies $\mathbb{Z} R\cap Ker (-,-)=F(\alpha)$, that is
$F(\alpha)=L$ as required.
\end{proof}

\section{Case when $cl(R)$ is finite and is generated by a
basis of $cl(V)$}\label{sect4}
In this section we classify the irreducible GRRSs $R$ with a finite $cl(R)$ generated by a basis of $cl(V)$.

Combining~\S~\ref{GRS}, \ref{DeltaPi},  we conclude
that if $(-,-)$ is non-degenerate, then
$R$ is a root system of a basic classical Lie superalgebra $\fg\not=\mathfrak{psl}(n,n), \mathfrak{osp}(1,2n)$, i.e. $cl(R)$
is from the following list:
\begin{equation}\label{ABCDE}
\begin{array}{l}
\text{classical root systems }: A_n,B_n,C_n,D_n, E_6,E_7,E_8, F_4, G_2,\\
A(m,n)\ m\not=n, B(m,n), m,n\geq 1; C(n), D(m,n),\ m,n\geq 2; D(2,1,a), F(4),  G(3).\end{array}
\end{equation}

If the form $(-,-)$ is degenerate, then $cl(R)$ is from the list~(\ref{ABCDE});
the classification is given by the following theorem.

\subsection{}
\begin{thm}{thmAGRS1}
Let $R\subset V$ be a GRRS and
$$k=\dim Ker (-,-)>0.$$

(i) If $cl(R)$ is one of the following GRRSs
$$\begin{array}{l}
A_n, n\geq 2, D_n, n\geq 4, E_6,E_7, E_8;\\
A(m,n)\ m\not=n; C(n), D(m,n), m\geq 2, n\geq 1;
D(2,1,a), F(4),  G(3),
\end{array}$$
then $R$ is the affinization of $cl(R)$:
$R=cl(R)^{(k)}$.

(ii) The isomorphism classes of GRRSs $R$ with $cl(R)=A_1$
are in one-to-one correspondence with the equivalence classes
of the subsets $S$ of  the affine space $\mathbb{F}_2^k$ containing an affine base of  $\mathbb{F}_2^k$, up to  affine automorphisms of $\mathbb{F}_2^k$.

(iii) If $cl(R)=G_2, F_4$, then $R$ is of the form $R(s)$ for $s=0,\ldots,k$
$$R(s):=\{\alpha+\sum_{i=1}^k \mathbb{Z}\delta_i|\ \alpha\in cl(R) \text{ is short}\}\cup \{\alpha+ \sum_{i=1}^s \mathbb{Z}\delta_i+\sum_{i=s+1}^k \mathbb{Z}r\delta_i|\
\alpha\in cl(R)\text{ is long}\},$$
where $\{\delta_i\}$ is a basis of $Ker(-,-)$ and $r=2$ for $F_4$ and $r=3$
for $G_2$. The GRRSs $R(s)$ are pairwise non-isomorphic.

(iv) The  GRRSs $R$ with $cl(R)=C_2$
are parametrized by the pairs $(S_1,S_2)$, where
$S_i$ are subsets of  the affine space $\mathbb{F}_2^k$ containing zero
such that

(1) $S_1$ contains an affine base of  $\mathbb{F}_2^k$,

(2) $S_1+S_2\subset S_1$.

Moreover, $R(S_1,S_2)\cong R(S_1',S_2')$ if and only if for $i=1,2$ one has
$S_i'=\phi(S_i)+a_i$, where $\phi\in GL(\mathbb{F}_2^k)$
and $a_1,a_2\in \mathbb{F}_2^k$ (so $v\mapsto \phi(v)+a_i$
is an affine automorphism of $\mathbb{F}_2^k$).

(v) The isomorphism classes of GRRSs $R$ with $cl(R)=B_n,C_n, n\geq 3$, $B(m,n), m,n\geq 1$ are in one-to-one correspondence with the equivalence classes of non-empty
subsets $S$ of the affine space  $\mathbb{F}_2^k$
up to  affine automorphisms of $\mathbb{F}_2^k$.

\end{thm}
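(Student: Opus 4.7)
The overall strategy is to parametrise $R$ via the ``fibre'' sets $F(\alpha) := \{v \in \Ker(-,-) \mid \alpha + v \in R\}$, $\alpha \in cl(R)$, using the section $V = V' \oplus \Ker(-,-)$ of \S\ref{Falphastr}. Since $cl(R)$ is in the list \eqref{ABCDE} other than $\psl(n,n)$ and $\osp(1,2n)$, \S\ref{DeltaPi} gives $R' = cl(R)$, so $0 \in F(\alpha) \subset L := \mathbb{Z}R \cap \Ker(-,-)$ for every $\alpha \in cl(R)$. In addition to \Prop{propFalpha} and the additivity \eqref{eqFalphabeta}, the main extra input is the cocycle inclusion $F(r_\alpha\beta) \supset F(\beta) - k_{\alpha,\beta}F(\alpha)$, valid for non-isotropic $\alpha$ (immediate from (GR2) applied to each $r_{\alpha+v}$, $v \in F(\alpha)$); its specialisation at $\beta = \alpha$ together with $F(\alpha) = -F(\alpha)$ yields the key identity $F(\alpha) + 2\mathbb{Z}F(\alpha) = F(\alpha)$. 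Part (i) is then immediate from \Cor{cor1} once $GW(cl(R))$-transitivity on $cl(R)$ is verified for each listed type: this is standard for simply-laced root systems, and for the listed superalgebras it follows from the existence of chains of odd reflections linking any two roots, as noted in the introduction.

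For part (iii), the Weyl group of $cl(R) \in \{G_2, F_4\}$ has exactly two orbits, short and long, so $F$ takes two values $F_s, F_\ell$. In each of $G_2$ and $F_4$ one finds a pair of short roots $\alpha, \beta$ with $r_\alpha\beta = \alpha + \beta$ short, so \eqref{eqFalphabeta} makes $F_s$ closed under addition; combined with the fact that the short roots of $cl(R)$ generate the root lattice (checked by direct inspection), this forces $F_s = L$. The cocycle relation between a short and a long root, where $|k_{s,\ell}| = r$, then pins $F_\ell$ between $rL$ and $L$, so $F_\ell/rL$ is an $\mathbb{F}_r$-subspace of $L/rL \cong \mathbb{F}_r^k$; such subspaces are classified up to linear change of basis by their dimension, yielding the integer $s \in \{0,\dots,k\}$ of the statement and the description $R = R(s)$. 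Distinct $s$ give distinct $|L/F_\ell| = r^{k-s}$, hence non-isomorphic $R$.

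For parts (ii), (iv), (v), further use of the cocycle relation yields $F(\alpha) + 2L = F(\alpha)$ for every $\alpha \in cl(R)$ (for short $\alpha$ this is the key identity applied to $F_s$ together with $\mathbb{Z}F_s = L$; for long $\alpha$ it follows from the cocycle applied to a short/long pair with $k_{s,\ell} = \pm 2$), descending each $F(\alpha)$ to a subset of $L/2L \cong \mathbb{F}_2^k$. In case (ii) there is one Weyl orbit, giving a single $S$; axiom (GR1), equivalent to $\mathbb{Z}F(\alpha) = L$, becomes exactly the condition that $S$ contains an affine basis of $\mathbb{F}_2^k$. In case (iv) the two Weyl orbits on $C_2$ give $(S_1, S_2)$ for short and long, and the additivity for a long/short pair with $k_{\ell, s} = -1$ gives the compatibility $S_1 + S_2 \subset S_1$. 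In case (v), for $B_n, C_n$ with $n \geq 3$ and $B(m,n)$, additional cocycle relations from the richer root structure (and odd reflections in the super case) determine one of $F_s, F_\ell$ from the other, collapsing the data to a single non-empty $S \subset \mathbb{F}_2^k$.

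The isomorphism classification follows by noting that a GRRS homothety preserves $\Ker(-,-)$ and $L$ and induces an element of $GL_k(\mathbb{F}_2)$ on $L/2L$, while the freedom to choose an $R$-representative of each $\alpha \in cl(R)$ translates all $F(\alpha)$ uniformly in $\Ker(-,-)$ and descends to translations on $\mathbb{F}_2^k$; these together generate the affine group of $\mathbb{F}_2^k$. The main obstacle is the converse existence direction: given data $S$ (or $(S_1, S_2)$) satisfying the stated conditions, one must construct an explicit $R$ realising it and verify axioms (GR0)--(GR3), a verification that is elementary in each case but requires the detailed reflection data of the particular $cl(R)$ (including isotropic roots in the super cases); the collapse in case (v) is also part of this obstacle, as it requires exhibiting enough cocycle relations to pin down the second $F$-set.
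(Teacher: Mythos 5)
Your proposal follows essentially the same route as the paper's proof: the reduction $cl(R)=R'\subset R\subset cl(R)^{(k)}$ via \S~\ref{DeltaPi} and \Lem{lemaff}, the fibre sets $F(\alpha)$ (the paper's $H_1,H_2$) constant on Weyl orbits, the closure conditions extracted from (GR2) (your ``cocycle inclusion''), reduction modulo $rL$ resp.\ $2L$, and the classification of isomorphisms by affine automorphisms of $\mathbb{F}_2^k$. The two points you flag as obstacles are handled in the paper exactly along the lines you anticipate: sufficiency is absorbed into the observation that (GR2) is \emph{equivalent} to the listed closure conditions on the $H_i$ (so any admissible $S$ yields a GRRS), and the collapse in case (v) comes from applying part (i) to the orbit $O_1=D_n$ (resp.\ $D(m,n)$), whose preimage in $R$ is a sub-GRRS by \S~\ref{lem3}, forcing $H_1$ to be a full-rank lattice rather than being ``determined from'' $H_2$.
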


\subsubsection{Remarks}
In (ii), (iv) we mean that $R(S)\cong R(S')$ for $S,S'\subset \mathbb{F}_2^k$  if and only  $S'=\psi(S)$ for some affine
automorphism $\psi$.

Notice that all above GRRSs are infinite (so affine).

Observe that (i) corresponds to the case when
$WG(cl(R))$ acts transitively on $cl(R)$ and $cl(R)\not=A_1$.
For $cl(R)=B_n,C_n$ $F_4, G_2$ and $B(m,n), m,n\geq 1$,
$cl(R)$ has two $GW(cl(R))$-orbits (see~\S~\ref{Weyl} for notation). We denote these orbits by
$O_1, O_2$, where $O_1$ (resp., $O_2$) is the set of short (resp., long)
roots for $C_n, F_4,G_2$ and
$O_1=D_m,D(m,n)$ for $B_m,B(m,n)$
respectively (and $O_2$ is the set of short roots in
both cases).

\subsection{Description of $R(S)$}
In order to describe the above  correspondences in (ii)--(iv)
between GRRSs and subsets
in $\mathbb{F}_2^k$ we fix a  free abelian group
 $L\subset Ker (-,-)$  of rank $k$ and
 denote by $\iota_2$  the canonical map
$\iota_2: L\to L/2L\cong\mathbb{F}_2^k$ and by $\iota^{-1}_2$
the preimage of $S\subset \mathbb{F}_2^k$ in $L$.

\subsubsection{Case when $S\subset \mathbb{F}_2^k$ contains zero}

For $cl(R)=A_1=\{\pm \alpha\}$ (case (ii)) one has
$$R(S):=\{\pm \alpha+\iota_2^{-1}(S)\}.$$
For $cl(R)=C_n$ one has
$$\begin{array}{ll}
R(S_1,S_2):=\{\alpha+\iota_2^{-1}(S_1)|\ \alpha\in O_1\}\cup
\{\alpha+ \iota_2^{-1}(S_2)|\ \alpha\in O_2\}\ & \text{ for  }n=2,\\
R(S):=\{\alpha+L|\ \alpha\in O_1\}\cup
\{\alpha+ \iota_2^{-1}(S)|\ \alpha\in O_2\} & \text{ for  }n>2.
\end{array}$$
In these cases $L:=\mathbb{Z}R\cap Ker(-,-)$.

For $cl(R)=B_n, n\geq 3,\ B(m,n), m,n\geq 1$ we take
$$R(S):=\{\alpha+2L|\ \alpha\in O_1\}\cup
\{\alpha+ \iota_2^{-1}(S)|\ \alpha\in O_2\}.$$

\subsubsection{}\label{choices}
Now assume that $S\subset \mathbb{F}_2^k$ is an arbitrary non-empty set.
Take any $s\in S$ and consider a set $S(s):=S-s$. The set $S(s)$ contains zero (and contains an affine basis for $\mathbb{F}_2^k$ if $S$
contained such a basis), so $R(S-s)$ is defined above. We set $R(S,s):=R(S-s)$. The sets
 $S(s)$ (for different choices of $s$) are conjugated by affine automorphism, so, as we will show in~\S~\ref{isoRS},
 the GRRSs corresponding to different choices of $s$ are isomorphic:
 $R(S,s')\cong R(S,s'')$ for any $s',s''\in S$ (in other words,
 $R(S):=R(S,s)$ is defined up to an isomorphism).

\subsection{Proof of~\Thm{thmAGRS1}}
The rest of this section is devoted to the proof of~\Thm{thmAGRS1}.
We always assume that $0\in S$.
Considering $B_n$ (reps., $B(m,n)$) we always assume that $n\geq 3$ (resp., $m,n\geq 1$).

\subsubsection{}\label{pfAGRS11}
Recall that $cl(R)$ is generated (as a GRRS) by a basis $\Pi$ of $cl(V)$. We take
 $X:=\Pi$ in the construction of $R'$ (see~\S~\ref{Falphastr}).
We obtain $R'=cl(R)$, so $cl(R)\subset R$.
Using~\Lem{lemaff} we obtain
$$cl(R)\subset R\subset cl(R)^{(k)}.$$

\subsubsection{}
It is easy to verify that
if $cl(R)$ is as in (i), then $WG(cl(R))$ acts transitively on $cl(R)$, so (i) follows from~\Cor{cor1}.

\subsubsection{Case $cl(R)=A_1$}
Let $cl(R)=A_1=\{\pm\alpha\}$; set $L:=\mathbb{Z}R\cap Ker (-,-)$.
Then $R\subset cl(R)^{(k)}$ and so, by (GR1), $L$ is a free group
of rank $k$. If $k=1$, then $R=A_1^{(1)}$ by~\S~\ref{gaps}.
Consider the case $k>1$. Recall that $R=\{\pm (\alpha+H)\}$, where $H\subset L$ contains $0$, so (GR1) is
equivalent to the condition that $H$ contains a basis of $L$.
For each $x,y\in Ker (-,-)$ one has $r_{\alpha+x}(\alpha+y)=-\alpha+y-2x$,
 so (GR2) is equivalent to $2x-y\in H$ for each $x,y\in H$, that is $H+2L\subset H$.
Hence $H$ is a set of equivalence classes of $L/2L=\mathbb{F}_2^k$ which contains $0$ and a basis of $\mathbb{F}_2^k$.

View $\mathbb{F}_2^k$ as an affine space. Recall that an affine basis of a $k$-dimensional
affine space $\mathbb{F}^k$ is a collection of points $x_1,\ldots,x_{k+1}$
such that any point $y\in \mathbb{F}^k$ is of the form $\sum_{i=1}^{k+1} \lambda_i x_i$
for some $\lambda_i\in\mathbb{F}$ with $\sum_{i=1}^{k+1}\lambda_i=1$.
We conclude that $R=\{\pm (\alpha+H)\}$ is a GRRS if and only if
the set $S:=\iota_2(H)\subset\mathbb{F}_2^k$ has the following properties: $0\in S$ and $S$
contains an affine basis  of $\mathbb{F}_2^k$.

\subsubsection{Construction of $H_1,H_2$}
Assume that $WG(cl(R))$ does not act transitively on $cl(R)$. Then $cl(R)$ has two orbits $O_1$ and $O_2$, see above.
By~\Prop{propFalpha} one has
$$R=\{\alpha+H_1|\ \alpha\in O_1\}\cup \{\alpha+H_2|\ \alpha\in O_2\},$$
where $H_1,H_2\subset Ker(-,-)$  and $0\in H_1, H_2$ (since $cl(R)\subset R)$.

Except for the case $cl(R)=C_2$ the orbit $O_1$ is an irreducible  GRRS with the transitive action of
$WG(O_1))$ (one has $O_1=D_n$ for $B_n,C_n, F_4$, $O_1=D(m,n)$ for $B(m,n)$ and $O_1=A_2$ for $G_2$).
Combining~\Lem{lem3} and (i), we obtain
that $H_1$ is a free abelian subgroup of $Ker(-,-)$ if
$cl(R)\not=C_2$. We introduce $L$ as follows:
\begin{equation}\label{eqL}
L:=\left\{ \begin{array}{ll}
H_1 & \text{ for }cl(R)\not=C_2, B(m,n), B_n;\\
\frac{1}{2}H_1 & \text{ for }cl(R)=B(m,n), B_n;\\
\mathbb{Z}R\cap Ker(-,-) & \text{ for }cl(R)=C_2.
\end{array}\right.\end{equation}

\subsubsection{Cases $F_4, G_2$}
For these cases  $O_2\cong O_1$, so $O_2$ is also
an irreducible  GRRS with the transitive action of
$WG(O_2))$, and thus $H_2$ is a free abelian subgroup of $Ker(-,-)$.
One readily sees that (GR2) is equivalent to
$$H_2+rH_1, H_2+H_2\subset H_2,\ \ H_1+H_2\subset H_1,$$
where $r=2$ for $F_4$ and $r=3$ for $G_2$. This gives
$rL\subset H_2\subset L$, so $H_2/(rL)$ is an additive subgroup of $\mathbb{F}_r^k$. Thus $H_2/(rL)\cong \mathbb{F}_r^s$ for some $0\leq s\leq k$ and $s$ is an invariant of $R$.
This establishes (iii).

\subsubsection{Case $C_n$}
Take $n>2$.
One readily sees that (GR2) is equivalent to
$$H_2+2H_1, H_2+2H_2\subset H_2,\ \ H_1+H_2\subset H_1.$$
Since $H_1=L$, we get $H_2+2L\subset H_2\subset L$.
Taking $S:=\iota_2(H_2)$, we obtain $R\cong R(S)$.

Take $n=2$. In this case (GR2) is equivalent to
$$H_1+H_2, H_1+2H_1\subset H_1,\  H_2+2H_1, H_2+2H_2\subset H_2.$$
 Since $0\in H_1$, we obtain $H_2\subset H_1$, so $L=\mathbb{Z}R\cap Ker(-,-)$ is spanned by $H_1$.
Thus (GR2) is equivalent to $H_i+2L\subset H_i$ for $i=1,2$
and $H_1+H_2\subset H_1$. Taking $S_i:=\iota_2(H_i)$
 for $i=1,2$, we obtain $R\cong R(S_1,S_2)$ as required.

\subsubsection{Cases $B_n, B(m,n)$}
One readily sees that (GR2) is equivalent to
$$H_2+2H_2, H_2+H_1\subset H_2,\ \ H_1+2H_2\subset H_1.$$
Since $H_1=2L$, we get $H_2\subset L$ and $H_2+2L\subset H_2$.
Taking $S:=\iota_2(H_2)$, we obtain $R\cong R(S)$.

 \subsection{Isomorphisms $R(S)\cong R(S')$}\label{isoRS}
 It remains to verify that in (ii)-(v) one has
 $R(S)\cong R(S')$ if and only if
 $S=\psi(S')$ for some affine transformation $\psi$
 (for $C_2$ we have $S_i=\psi(S'_i)$ for $i=1,2$).

\subsubsection{}
Let $R(S)\subset V,\ R(S')\subset V'$ be two isomorphic GRRSs and let $\phi: V\iso V'$
with  $\phi(R(S))=R(S')$ be the isomorphism.
Define $L,L'$ and
$H_i, H_i' \ (i=1,2)$
for $R(S)$ and $R(S')$ as above (for $cl(R)=A_1$ we set $O_1:=O_2:=A_1$ and $H_1:=H_2:=H$). From~(\ref{eqL}) one has
 $\phi(L)=L'$ and thus $\phi(2L)=2L'$, so $\phi$
induces a linear isomorphism $\phi_2:  \mathbb{F}_2^k\iso\mathbb{F}_2^k$
such that $\iota_2'\circ \phi=\phi_2\circ \iota_2$
(where $\iota_2: L/2L\iso \mathbb{F}_2^k$ and
$\iota_2': L'/2L'\iso \mathbb{F}_2^k$ are the natural isomorphisms).

By the above construction,
$R(S)$ and $R(S')$ contain $cl(R(S))\cong cl(R(S'))$.
Take $\alpha\in O_2\in cl(R(S))$ and let $\alpha'$ be the corresponding element in
$cl(R(S'))$.
Then $\phi(\alpha)=\alpha'+v$ for some $v\in H'_2$. Since
$\phi$ is linear,
$\phi(\alpha+x)=\alpha'+v+\phi(x)$ for each $x\in L$.
This implies $H_2'=v+\phi(H_2)$, that is
$$S'=\iota'_2(H'_2)=\iota'_2(v)+\iota'_2(\phi(H_2))=\iota'_2(v)+
\phi_2(\iota_2(H_2))=\iota'_2(v)+\phi_2(S).$$
This shows that $S'$ is obtained from $S$ by an affine automorphism $\psi:=\iota'_2(v)+\phi_2$
of $\mathbb{F}_2^k$ as required.

 For the case $cl(R)=C_2$ the above argument gives
 $S'_i=a_i+\phi_2(S_i)$ for some $a_i\in S_i'$ ($i=1,2$).

 \subsubsection{}
 Let $R(S), R(S')\subset V$ be two GRRSs with $cl(R(S))=cl(R(S'))$ (and the same
 $L$), and let $S'=\psi_2(S)+\ol{a}$ if $cl(R)\not=C_2$
 (resp., $S_i=\psi_2(S)+\ol{a}_i$ for $i=1,2$ if $cl(R)\not=C_2$), where $a\in L$ and $\ol{a}\in \mathbb{F}_2^k=L/2L$ (resp., $a_i\in L$ and $\ol{a}\in \mathbb{F}_2^k$)
 and $\psi_2$ is a linear automorphism of $\mathbb{F}_2^k$.
 Fix a linear isomorphism $\psi:L\to L$ such that
 $\iota_2\circ \psi=\psi_2\circ \iota_2$.

 Recall that $V=Ker(-,-)\oplus \mathbb{C}\Pi$, where
 $Ker (-,-)=L\otimes_{\mathbb{Z}}\mathbb{C}$ and
$\Pi\subset cl(R(S))=cl(R(S'))$ is linearly independent in $V$.
 Extend $\psi$ to a linear automorphism of $V$ by
 putting $\psi(\alpha):=\alpha+a$ for each $\alpha\in \Pi\cap O_2$
 and $\psi(\alpha):=\alpha$ for each $\alpha\in \Pi\cap O_1$ if
 $cl(R)\not=A_1,C_2$
 (resp., $\psi(\alpha):=\alpha+a$ for $\alpha\in \Pi$ if $cl(R)=A_1$ and
$\psi(\alpha):=\alpha+a_i$ for each $\alpha\in \Pi\cap O_i$, where $i=1,2$
for $cl(R)=C_2$).
 One readily sees that $\psi$ preserves $(-,-)$ and $\psi(R(S))=R(S')$.
 Thus $R(S)\cong R(S')$ (resp., $R(S_1,S_2)\cong R(S'_1,S'_2)$) as required.

\section{Case when $cl(R)$ is the root system
of $\mathfrak{psl}(n+1,n+1)$ for $n>1$}
\label{Ann}
In this section we describe $R$ such that $cl(R)$ is the root system
of $\mathfrak{psl}(n+1,n+1)$ for $n>1$.

\subsection{Description of $A(n,n), A(n,n)_f, A(n,n)_x$}

A finite  GRRS $A(n,n)\subset V$ can be described as follows.
Let $V_1$ be a complex vector space endowed
with a symmetric bilinear form
 and an orthogonal basis $\vareps_1,\ldots,\vareps_{2n+2}$ such that $(\vareps_i,\vareps_i)=-(\vareps_{n+1+i},\vareps_{n+1+i})=1$ for $i=1,\ldots, n+1$. One has

$$A(n,n)=\{\vareps_i-\vareps_j\}_{i\not=j},\ \ \ V=\{\sum_{i=1}^{2n+2}
a_i\vareps_i|\ \sum_{i=1}^{2n+2} a_i=0\},$$
where the reflection $r_{\vareps_i-\vareps_j}$ is
the restriction of the linear map
$\tilde{r}_{\vareps_i-\vareps_j}\in End(V_1)$
which interchanges $\vareps_i\leftrightarrow \vareps_j$
and preserves all other elements of the basis.
One readily sees that $A(n,n)\subset V$ is a finite GRRS; it is
the root system of the Lie superalgebra $\mathfrak{pgl}(n+1,n+1)$
($V_1$ corresponds to $\fh^*$, where $\fh$ is a Cartan subalgebra of
$\mathfrak{gl}(n+1,n+1)$ and $V\subset V_1$ is dual to the Cartan subalegbra
of $\mathfrak{pgl}(n+1,n+1)$).
The kernel of the bilinear form on $V$ is spanned by
$$I:=\sum_{i=1}^{n+1}(\vareps_i-\vareps_{n+1+i}).$$

The root system of $\mathfrak{psl}(n+1,n+1)$ is the quotient of $A(n,n)$
by $\mathbb{C}I$ (it is a bijective quotient if $n>1$);
we denote it by $A(n,n)_f$: $A(n,n)_f:=cl(A(n,n))$. Recall that
$A(n,n)_f$ is a GRRS if and only if $n>1$ and
 $A(1,1)_f$ is a WGRS (denoted by $C(1,1)$ in~\cite{VGRS}, see~\S~\ref{GRS}).

Let $A(n,n)^{(1)}\subset V^{(1)}:=V\oplus \mathbb{C}\delta$ be the affinization of $A(n,n)$. We denote by $cl_x$ the canonical map
 $$cl_x: V\oplus\mathbb{C}\delta\to V_x:=(V\oplus\mathbb{C}\delta)/\mathbb{C}(I+ x\delta)$$
and by  $A(n,n)_x$ the corresponding quotient of $A(n,n)^{(1)}$:
$$A(n,n)_x:=cl_x(A(n,n)^{(1)}).$$
Note that $A(n,n)_0=(A(n,n)_f)^{(1)}$.
The kernel of $(-,-)$ on $V_x$ is one-dimensional and $$cl(A(n,n)_x)\cong A(n,n)_f.$$
Note that (GR1) holds for
$A(n,n)_x$ only if $x\in\mathbb{Q}$ (since $cl_x(\delta),
xcl_x(\delta)\in \mathbb{Z}A(n,n)_x$). We will see that
for $n>1$ this condition is sufficient: $A(n,n)_x$ is a GRRS if and only if
$x\in\mathbb{Q}$; for $n=1$ integral values of $x$ should be excluded, see~\ref{uksi}
below.

\subsection{Description of $A(1,1)_x, \ x\in\mathbb{Q}$}\label{uksi}
Let $x=p/q$ be the reduced form ($p,q\in\mathbb{Z}, q>1, GCD(p,q)=1$).
Set $\delta':=cl_x(\delta)/q,\  e:=cl_x(\vareps_1-\vareps_2)/2,\
d:=cl_x(\vareps_3-\vareps_4
)/2$; note that $\delta', e,d$ form
 an orthogonal basis of $V_x$ satisfying $(\delta',\delta')=0$ and
$(e,e)=-(d,d)=1/2$. One has
$$A(1,1)_x=\{\pm 2e+\mathbb{Z}q\delta',\ \pm 2d+\mathbb{Z}q\delta',\
\pm e\pm d+ (\mathbb{Z}q\pm p/2)\delta'\},$$
and $cl(A(1,1)_x)=A(1,1)_f=C(1,1)=\{\pm 2e,\ \pm 2d,\ \pm e\pm d\}$.

Note that $\mathbb{Z}A(1,1)_x\cap Ker (-,-)$ is $\mathbb{Z}\delta'$
(since $GCD(p,q)=1$), so the non-isotropic roots in $C(1,1)$
has the gap $q$ (and the gap of isotropic roots is not defined).
Observe that $A(1,1)_x$ is not a GRRS for $x\in\mathbb{Z}$, since
$\alpha:=e+d+p/2\delta', \beta:=e-d-p/2\delta'$ are isotropic non-orthogonal roots and
$\alpha\pm\beta\in R$ which contradicts to (GR3), see~\S~\ref{alphabeta}.

\subsection{}
In this section we prove the following proposition describing the
affine GRRSs $R$ with $cl(R)=A(n,n)_f, n>1$.

\begin{prop}{propAnnx}
(i) $A(1,1)_x$ is a GRRS if and only if $x\in\mathbb{Q},\ x\not\in\mathbb{Z}$;
$A(n,n)_x$ for $n>1$ is a GRRS if and only if $x\in\mathbb{Q}$.

(ii) Let $R$ be a GRRS with $\dim Ker(-,-)=1$ and $cl(R)=A(n,n)_f, n>1$.
If $R$ is finite,
then $R\cong A(n,n)$. If $R$ is infinite, then $R\cong A(n,n)_x$ for some
$x\in\mathbb{Q}$ and it is a bijective quotient of $A(n,n)^{(1)}$;
each $\alpha\in A(n,n)_f$ has the gap $q$.
Moreover $A(n,n)_x\cong A(n,n)_y$ if and only if either $x+y$ or $x-y$ is integral.

(iii) If $R$ is a GRRS with $\dim Ker(-,-)=k+1>1$ and $cl(R)=A(n,n)_f, n>1$, then
$R$ is isomorphic to
 $A(n,n)^{(k+1)}$ or to its bijective quotient
$A(n,n)_{1/q}^{(k)}$ for some $q\in\mathbb{Z}_{>0}$ and these GRRS are
pairwise non-isomorphic. Moreover,
$A(n,n)_{p/q}^{(k)}\cong A(n,n)_{1/q}^{(k)}$ if $GCD(p,q)=1$.
\end{prop}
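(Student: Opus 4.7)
The plan is to capture every such $R$ by a single linear map $\phi$ from the ambient space of $A(n,n)$ into $V$, and read off the parameters of $R$ from $\tilde I := \phi(I)\in\Ker(-,-)$. Lift the distinguished simple roots $\alpha_1,\dots,\alpha_{2n+1}$ of $A(n,n)_f$ to $\tilde\alpha_i\in R$, and extend $\alpha_i\mapsto\tilde\alpha_i$ linearly to $\phi$; since both forms descend to the same non-degenerate form on $A(n,n)_f\subset V/\Ker(-,-)$, $\phi$ preserves $(-,-)$, commutes with reflections, and $\phi(A(n,n))\subseteq R$ is a subsystem. The element $\tilde I = \sum c_j\tilde\alpha_j\in\mathbb{Z}R\cap\Ker(-,-)$ (where $c_j = \min(j,2n+2-j)$ comes from the relation $I=\sum c_j\alpha_j$ in $A(n,n)$) encodes the ``twist'' of $R$. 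The central subtlety, which is the main obstacle, is that~\Cor{cor1} does not apply: $A(n,n)_f$ has $2n+1$ simple roots but only a $2n$-dimensional span, so no basis of $V/\Ker(-,-)$ drawn from $R$ generates a subsystem whose $cl$-image is all of $A(n,n)_f$; we must therefore work with the full, linearly dependent lift set and track the extra relation $\sum c_j\tilde\alpha_j = \tilde I$.

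\textbf{Part (i).} Axioms (GR0), (GR2) for $A(n,n)_x = cl_x(A(n,n)^{(1)})$ are inherited from $A(n,n)^{(1)}$. For (GR1), the subgroup $\mathbb{Z}A(n,n)_x\cap\Ker(-,-)$ (computed as in~\S\ref{uksi} for $n=1$ and analogously for $n>1$) spans $\Ker$ iff $x\in\mathbb{Q}$. For (GR3), failure occurs only when two non-orthogonal isotropic roots $\alpha,\beta$ satisfy both $\alpha+\beta$ and $\alpha-\beta$ in $R$; \S\ref{uksi} exhibits this pathology for $n=1, x\in\mathbb{Z}$, while for $n>1$ or $x\notin\mathbb{Z}$ a direct inspection of the root list rules it out.

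\textbf{Part (ii).} With $\dim\Ker(-,-) = 1$ and $\mathbb{Z}R\cap\Ker(-,-) = \mathbb{Z}\delta$, write $\tilde I = m\delta$ for $m\in\mathbb{Z}$. If $R$ is finite then all gaps vanish and $cl$ bijects $R$ onto $A(n,n)_f$; were $\tilde I = 0$, then $\phi(A(n,n))$ would span only a $2n$-dimensional subspace, so $R = \phi(A(n,n))$ (equal by cardinality) would violate~(GR1). Hence $\tilde I\neq 0$, $\phi$ is a linear isomorphism, and $R = \phi(A(n,n))\cong A(n,n)$. If $R$ is infinite, the common gap $q\geq 1$ (well-defined by $GW(cl(R))$-transitivity) gives $R = \bigsqcup_\gamma(\phi(\tilde\gamma) + \mathbb{Z}q\delta)$ with $\tilde\gamma$ the unique $A(n,n)$-preimage of $\gamma$; extending $\phi$ by $\delta'\mapsto q\delta$ to $\psi$ on the ambient space of $A(n,n)^{(1)}$ yields $\psi(A(n,n)^{(1)}) = R$ and $\ker\psi = \mathbb{C}(I+x\delta')$ with $x = -m/q\in\mathbb{Q}$, identifying $R\cong A(n,n)_x$. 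Since $\gcd(c_j) = 1$, changing lifts shifts $m$ by any integer, so $x\bmod\mathbb{Z}$ is invariant; the $A(n,n)$-automorphism $\varepsilon_i\leftrightarrow\varepsilon_{n+1+i}$ sends $I\to -I$, giving $A(n,n)_x\cong A(n,n)_{-x}$ and the claimed iff.

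\textbf{Part (iii).} With $L := \mathbb{Z}R\cap\Ker(-,-)$ of rank $k+1\geq 2$, the analogue of~\Cor{cor1}'s argument (using $GW(cl(R))$-transitivity together with the group law $F(\alpha+\beta) = F(\alpha)+F(\beta)$ from~(\ref{eqFalphabeta})) identifies a common sublattice $F\subseteq L$ giving $R = \bigsqcup_\gamma(\phi(\tilde\gamma) + F)$. The invariant of $R$ is then the ``shape'' of $\tilde I$ in $L/F$, encoded by a non-negative integer $q$: for $q = 0$ (i.e., $\tilde I$ is killable by adjusting lifts) one obtains $R\cong A(n,n)^{(k+1)}$; for $q > 0$, picking a $\mathbb{Z}$-basis $\delta_1,\dots,\delta_{k+1}$ of $L$ with $\tilde I = q\delta_1$ and extending $\phi$ by $\delta'_i\mapsto\delta_i$ to $\psi$ gives $\ker\psi = \mathbb{C}(I+(1/q)\delta'_1)$, hence $R\cong A(n,n)_{1/q}^{(k)}$. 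The iso $A(n,n)_{p/q}^{(k)}\cong A(n,n)_{1/q}^{(k)}$ for $\gcd(p,q) = 1$ follows from a $GL_{k+1}(\mathbb{Z})$ basis change on $L$ via Bezout, and distinct $q$'s are non-isomorphic by invariance of $q$.
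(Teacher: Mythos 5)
Your overall strategy --- lifting the $2n+1$ simple roots of $A(n,n)_f$ to $R$, packaging the lifts into a form-preserving linear map $\phi$ from the ambient space of $A(n,n)$, and reading the classification off $\tilde I=\phi(I)\in \Ker(-,-)$ --- is a legitimate repackaging of the paper's argument ($\tilde I$ modulo the common lattice plays the role of the paper's element $a$ appearing in $F(\pm(\vareps_i-\vareps_{2n}))=\pm S=\pm a+L'$), and part (i) together with the finite case of (ii) is fine. But part (iii) contains a genuine error. The correct invariant is the order of the class $[\tilde I]$ in $L/F$ (equivalently $|L/F|$, since $[\tilde I]$ generates $L/F$): infinite order, i.e.\ $\tilde I\notin\mathbb{Q}F$, gives the affinization of $A(n,n)$ (the paper's case $a\notin\mathbb{Q}L'$, where $L=L'\oplus\mathbb{Z}a$ and $a\mapsto I$ extends the embedding $A(n,n-1)\hra A(n,n)$), while finite order $q\geq 1$ gives $A(n,n)_{1/q}^{(k)}$. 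Your scheme assigns the ``killable'' case $[\tilde I]=0$ to the affinization; but $\tilde I\in F$ yields $R=A(n,n)_f+F\cong A(n,n)_f^{(k+1)}\cong A(n,n)_{1/1}^{(k)}$, which is a proper bijective quotient of the affinization and is not isomorphic to it (the invariant $L/F$, the paper's $L/L'$, is trivial versus $\mathbb{Z}$). The case $\tilde I\notin\mathbb{Q}F$ is absent from your trichotomy, and your parametrization by ``$\tilde I=q\delta_1$ for a $\mathbb{Z}$-basis of $L$'' measures the content of $\tilde I$ in $L$ rather than its order in $L/F$; these disagree (e.g.\ $\tilde I$ primitive in $L$ and independent of $F$ would give $q=1$ in your scheme but the affinization in reality). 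The accompanying computation is also inconsistent: with $\psi(\delta_1')=\delta_1$ and $\tilde I=q\delta_1$ one gets $\psi(I+\tfrac1q\delta_1')=(q+\tfrac1q)\delta_1\neq 0$, so $\ker\psi$ is not $\mathbb{C}(I+\tfrac1q\delta_1')$.

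Two further points. First, in (ii) the ``only if'' direction of $A(n,n)_x\cong A(n,n)_y\Llra x\pm y\in\mathbb{Z}$ is not established: you show that $x\bmod\mathbb{Z}$ is unchanged under re-choosing the lifts $\tilde\alpha_j$, but an isomorphism of GRRSs may induce a nontrivial automorphism of $A(n,n)_f$ permuting the simple roots, and you must check that no such automorphism alters the invariant; the paper avoids this entirely by exhibiting the manifestly isomorphism-invariant set~(\ref{Upq}) of kernel-valued sums over $(n+1)$-element subsets. Second, your appeals to ``$GW(cl(R))$-transitivity'' for the common gap in (ii) and the common lattice $F$ in (iii) are precisely the step that your own opening paragraph flags as unavailable: \Prop{propFalpha}(iv) gives invariance of $F$ only under $GW(R')$ with $R'=A(n,n-1)\ssubset cl(R)$, which has three orbits on $A(n,n)_f$, so one must first glue the orbits via the relations $S=L'+S$ and $S+(-S)=L'$ coming from~(\ref{eqFalphabeta}), as the paper does, before any common subgroup (or common gap) exists.
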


\subsubsection{Remark}
Recall that $A(n,n)_{0}=A(n,n)_f^{(1)}$, so  for each $p\in\mathbb{Z}$ one has
$A(n,n)_f^{(k+1)}\cong A(n,n)_p^{(k)}$ for $k\geq 0$.

\subsection{Proof}
By above, $A(n,n)_x$ satisfies (GR1) only if
$x\in\mathbb{Q}$ and, in addition, $x\not\in\mathbb{Z}$
for $n=1$. One readily sees that the converse holds
(these conditions imply (GR1)).
Since $A(n,n)^{(1)}$ is a GRRS, its quotient
$A(n,n)_x$ satisfies (GR0), (GR2) and (WGR3).
Using~\S~\ref{alphabeta} it is easy to show that
(GR3) does not hold if and only if
$n=1$ and $x\in\mathbb{Z}$. This establishes (i).

It is easy to see that $A(n,n)_x$ is a bijective quotient of
$A(n,n)^{(1)}$ for $n>1$.

\subsubsection{}
Let $R$ be a GRRS with  $cl(R)=A(n,n)_f, n>1$.
Set $L:=Ker (-,-)\cap \mathbb{Z}R$; by (GR1) one has $L\cong \mathbb{Z}^{k+1}$,
where $k+1=\dim Ker(-,-)$.

Recall that $\tilde{\Pi}:=\{\vareps_i-\vareps_{i+1}\}_{i=1}^{2n+1}$
is a set of simple roots for $A(n,n)$ and
$\Pi:=\{\vareps_i-\vareps_{i+1}\}_{i=1}^{2n}$
is a set of simple roots for a GRRS $A(n,n-1)$. Applying the procedure described in~\S~\ref{Falphastr} to $X:=\Pi$, we get $R'=A(n,n-1)$.
Let $V'$ be the span of $R'$. One has $V=\mathbb{C} I\oplus V'$, so
$R'=A(n,n-1)$ can be naturally viewed as a subsystem of $A(n,n)_f$.
Note that $A(n,n)_f$ has three $GW(A(n,n-1))$-orbits: $A(n,n-1)$ itself, $O_1:=\{\vareps_i-\vareps_{2n}\}_{i=1}^{2n-1}$
and $-O_1$. By~\Prop{propFalpha} for $i\not=j<2n$ one has
$$F(\vareps_i-\vareps_j)=L',\ \ F(\pm(\vareps_i-\vareps_{2n}))=\pm S,$$
where $S,L'\subset L$ and, by~\Thm{thmAGRS1} (since $n>1$),
$L'$ is a free group. By~(\ref{eqFalphabeta}),
$$S=L'+S,\ \ S+(-S)=L',$$
so $S=a+L'$ for some $a\in L$. Note that $L=L'+\mathbb{Z}a$.

If $a\not\in \mathbb{Q}L'$, then $L=L'\oplus \mathbb{Z}a$.
Extending the embedding $A(n-1,n)\to A(n,n)$ by $a\mapsto I$
we obtain the isomorphism $R\cong A(n,n)^{(k)}$.
(If $k=0$, then $L'=0$, so $R\cong A(n,n)$).

If $a\in L'$, then $S=-S=L$ and $R=(A(n,n)_f)^{(k+1)}=(A(n,n)_0)^{(k)}$.

Consider the remaining case $a\in \mathbb{Q}L'\setminus L'$.
Take the minimal $q\in\mathbb{Z}_{>1}$ such that $qa\in L'$  and
the maximal $p\in\mathbb{Z}_{>0}$ such that $qa\in pL$. Then
$GCD(p,q)=1$ and
\begin{equation}\label{LL'}
L'=\mathbb{Z} e\oplus L'',\ S=(p/q+\mathbb{Z})e\oplus L'',\
L=\mathbb{Z} \frac{e}{q}\oplus L''
\ \text{ where }
L''\cong \mathbb{Z}^k\end{equation}
where $e:=\frac{q}{p}a$. Hence
$$R\cong\bigl(A(n,n)/(I-\frac{p}{q}\delta)\bigr)^{(k)}
=\bigl(A(n,n)_{p/q}\bigr)^{(k)}.$$

\subsubsection{}
Let us show that
$A(n,n)_x\cong A(n,n)_y$ if either $x+y$ or $x-y$ is integral.
Consider the linear endomorphisms $\psi, \phi\in End(V\oplus\mathbb{C}\delta)$
defined by
$$\psi(v):=v\ \text{ for }v\in V;
\ \ \psi(\delta):=-\delta,$$
and $\phi(\delta)=\delta$,
$$\phi(\vareps_i-\vareps_{i+1})=\vareps_i-\vareps_{i+1}\ \text{ for }
i=1,\ldots,2n;\  \phi(\vareps_{2n+1}-\vareps_{2n+2})=\vareps_{2n+1}-\vareps_{2n+2}+\delta.
$$
These endomorphisms preserve $(-,-)$ and $A(n,n)^{(1)}$.
Since $\psi(I+x\delta)=I-x\delta$ and $\phi(I+x\delta)=I+(x+1)\delta$,
$\psi$ (resp., $\phi$) induces an isomorphism $V_x\to V_{-x}$
(resp., $V_x\to V_{x+1}$)
which preserves
the bilinear forms and maps $A(n,n)_x$ to $A(n,n)_{-x}$
(resp., to $A(n,n)_{x+1}$).
Hence $A(n,n)_x\cong A(n,n)_{-x}\cong A(n,n)_{x+1}$ as required.

Let us show that
$A(n,n)_x\cong A(n,n)_y$ imples that either $x+y$ or $x-y$ is integral.
For each subset $J$ of  $A(n,n)_x$ we
set $sum(J):=\sum_{\alpha\in J}\alpha$ and we let $U$ be the set
of subsets $J$ of $A(n,n)_x$ containing exactly $n+1$ roots.
It is not hard to see that
\begin{equation}\label{Upq}
Ker (-,-)\cap \{sum(J)|\ J\in U\}=\left\{\begin{array}{ll}
(\pm p+\mathbb{Z}q)\delta' & \text{ for even } n,\\
(\pm p+\mathbb{Z}q)\delta' \cup \{\mathbb{Z}q\delta'\}& \text{ for odd } n,
\end{array}\right.
\end{equation}
where $\delta'$ is a generator of $\mathbb{Z}R\cap Ker(-,-)\cong \mathbb{Z}$ and
$x=p/q$ with $GCD(p,q)=1$. Thus $A(n,n)_x\cong A(n,n)_y$ implies
$\pm p+\mathbb{Z}q=\pm p'+\mathbb{Z}q'$, where $y=p'/q'$ with $GCD(p',q')=1$.
The claim follows; this completes the proof of (ii).

\subsubsection{}
Now take $R$ such that $cl(R)=A(n,n)_f$ with $n>1$ and
fix $\alpha\in R$. Set $L:=Ker(-,-)\cap \mathbb{Z}R$
and $L':=\{v\in Ker(-,-)| \alpha+v\in R\}$.
One readily sees from above that $L/L'=\mathbb{Z}$
if $R=A(n,n)^{(k)}$ and
$L/L'=\mathbb{Z}/q\mathbb{Z}$ if $R=A(n,n)_{p/q}^{(k)}$ (with $GCD(p,q)=1$).
Therefore $A(n,n)^{(k)}\not\cong A(n,n)_{p/q}^{(k')}$ and
$A(n,n)_{p/q}^{(k)}\cong A(n,n)_{p'/q'}^{(k')}$ with $GCD(p',q')=1$ forces $q=q', k=k'$.

It remains to check that
for $k\geq 1$ one has  $A(n,n)_{p/q}^{(k)}\cong A(n,n)_{1/q}^{(k)}$.
Clearly, it is enough to verify this for $k=1$. Note that
$A(n,n)_x^{(1)}$ is the quotient of $A(n,n)^{(2)}$ by $\mathbb{C}(I+x\delta)$:
$A(n,n)\subset V$ and
$$V^{(2)}=V\oplus (\mathbb{C}\delta\oplus\mathbb{C}\delta'),\ \
A(n,n)^{(2)}=A(n,n)+\mathbb{Z}\delta+\mathbb{Z}\delta',$$
where $(V^{(2)},\delta)=(V^{(2)},\delta')=0$.
Consider the linear endomorphism $\phi\in End(V^{(2)})$
defined by $\phi(\delta)=a\delta+q\delta'$, $\phi(\delta')=b\delta+p\delta'$
where $a,b\in\mathbb{Z}$ are such that $pa-qb=1$ and
$$\phi(\vareps_i-\vareps_{i+1})=\vareps_i-\vareps_{i+1}\ \text{ for }
i=1,\ldots,2n;\  \phi(\vareps_{2n+1}-\vareps_{2n+2})=\vareps_{2n+1}-
\vareps_{2n+2}-b\delta-p\delta'.
$$
Then $\phi(I)=I-b\delta-p\delta'$, so
$\phi(I+\frac{p}{q}\delta)=I+\frac{1}{q}\delta$ and
$\phi$  induces an isomorphism $A(n,n)_{p/q}^{(k)}\cong A(n,n)_{1/q}^{(k)}$.
This completes the proof of (iii).

\section{The cases $BC_n, BC(m,n), C(m,n)$}\label{sect6}
\subsection{Case $BC_n$}
Let $cl(R)=BC_n$ and $k=\dim Ker(-,-)$.

Let $\{\vareps_i\}_{i=1}^n$ be an orthonormal basis of $cl(V)$.
Recall that $cl(R)=BC_n$  have three $W(BC_n)$-orbits
$$O_1:=\{\pm\vareps_i\}_{i=1}^n,\ \ O_2:=\{\pm 2\vareps_i\}_{i=1}^n,\ \
O_3:=\{\pm\vareps_i\pm\vareps_j\}_{1\leq i<j\leq n}^n$$
for $n>1$ and two $W$-orbits, $O_1$ and $O_2$, for $n=1$.

We take $X$ to be a set of simple roots of $B_n=O_1\cup O_3$
($X=\{\vareps_1-\vareps_2,\ldots,\vareps_{n-1}-\vareps_n,\vareps_n\}$)
 in the construction of $R'$ (see~\S~\ref{Falphastr}).
 Then $R'=B_n$ and $W(BC_n)=W(B_n)$.
  We set $H_i:=F(\gamma_i)$ for $\gamma_i\in O_i$ ($i=1,2,3$).
  Recall that $-H_i=H_i$ for $i=1,2,3$ and $0\in H_1,H_3$.

\subsubsection{Case $n=1$}
One has $BC_1=\{\pm\vareps_1,\pm 2\vareps_1\}$, $X:=\{\vareps_1\}$.
 (GR2) is equivalent to
\begin{equation}\label{eqBC1}
0\in H_1,\ \ \ H_1+2H_1, H_1+H_2\subset H_1,\ \ \ H_2+2H_2, H_2+4H_1\subset H_2.
\end{equation}
Therefore $L:=\mathbb{Z}R\cap Ker(-,-)$ is spanned by $H_1$ and
$$H_1+2L\subset H_1,\ H_2+4L\subset H_2,\ H_2\subset H_1,\ H_2+2H_2\subset H_2.$$
As in~\Thm{thmAGRS1}, we introduce the canonical map
$\iota_2: L\to L/2L\cong\mathbb{F}_2^k$ (where
$k:=\dim Ker(-,-)$). Using~\Thm{thmAGRS1} (ii) we conclude that
$$R=\{\pm (\vareps_1+ \iota_2^{-1}(S)\}\cup\{\pm (2\vareps_1+ H_2)\},$$
where $S\subset \mathbb{F}_2^k=L/2L$ contains zero and a basis
of $\mathbb{F}_2^k$ and $H_2\subset \iota_2^{-1}(S)$ satisfying
$$H_2+4L, H_2+2H_2\subset H_2.$$

\subsubsection{Case $n\geq 2$}
(GR2) is   equivalent to~(\ref{eqBC1}) and the following
conditions on $H_3$:
$$0\in H_3,\ \ H_1+H_3\subset H_1,\ \ H_2+2H_3\subset H_2,\
\ H_3+2H_1, H_3+H_2, H_3+2H_3\subset H_3,$$
and $H_3+H_3\subset H_3$ for $n>2$.
Set
$$L:=\mathbb{Z}H_3;$$
by above, $Ker(-,-)=L\otimes_{\mathbb{Z}}\mathbb{C}$, so
$L$ has rank $k$.

For $n>2$ each $R$ is of the form $R(S_1,S_2)$, where
$S_1,S_2\subset \mathbb{F}_2^k$ and $0\in S_1$ and $R(S_1,S_2)$
can be described as follows:

$H_1\subset \frac{1}{2}L$ is the preimage of $S_1$ in $\frac{1}{2}L\to \mathbb{F}_2^k=\frac{1}{2}L/L$;

$H_2\subset L$ is the preimage of $S_2$ in $L\to \mathbb{F}_2^k=L/2L$; $H_3=L$.

For $n=2$ each $R$ is of the form
$R(S_1,S_2,H_3)$, where  $S_1,S_2$ as above
($S_1,S_2\subset \mathbb{F}_2^k$ and $0\in S_1$),
and $H_3\subset L$ contains $0$, a basis of $L$ and satisfies
$H_2+H_3=2H_1+H_3\subset H_3$ (where $H_1,H_2$ are as for $n>2$).

\subsection{}
\begin{prop}{caseBCmn}
Let $R\subset V$ be a GRRS with $k:=\dim Ker(-,-)$.

(i) The isomorphism classes of GRRSs $R$ with $cl(R)=C(m,n), mn>1$
are in one-to-one correspondence with the equivalence classes
of the proper non-empty
subsets $S$ of  the affine space  $\mathbb{F}_2^k$
up to  to the action of an affine automorphism of $\mathbb{F}_2^k$, see~\S~\ref{BCRS} for the description of $R(S)$.
For $m=n$ there is an additional isomorphism
$R(S)\cong R(\mathbb{F}_2^k\setminus S)$.

(ii) The isomorphism classes of GRRSs $R$ with $cl(R)=BC(m,n)$
are in one-to-one correspondence with the equivalence classes
of the pairs of a proper non-empty subset $S$
and a non-empty subset $S'$ of  the affine space  $\mathbb{F}_2^k$
up to the action of an affine automorphism of $\mathbb{F}_2^k$,
 see~\S~\ref{BCRS} for the description of $R(S,S')$.
 For $m=n$ there is an additional isomorphism
$R(S, S')\cong R(\mathbb{F}_2^k\setminus S, S')$.

(iii) If $R$ is a GRRS such that $cl(R)=C(1,1)$, then either $R\cong A(1,1)^{(k-1)}$ or $R$ is a "rational quotient" $A(1,1)^{(k)}_x$ (for $k=1$ one has
$x\in \mathbb{Q}, 0<x<1/2$, and
for $k>1$ one has $x=1/q$, where $q\in\mathbb{Z}_{>0}$)
of $A(1,1)^{(k)}$, or
$R\cong C(1,1)(S)$ for some non-empty $S\subset\mathbb{F}_2^k$,
see~\S~\ref{BCRS}.
The only isomorphic GRRSs are
$C(1,1)(S)\cong C(1,1)(S')$, where
$S'=\psi(S)$, where $\psi: \mathbb{F}_2^k\to\mathbb{F}_2^k$
is an affine automorphism and
$R(S)\cong R(\mathbb{F}_2^k\setminus S)$.
\end{prop}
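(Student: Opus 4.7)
The plan is to follow the template of \Thm{thmAGRS1}: apply the construction of Section~\ref{sect3} to obtain a sub-GRRS $R' \subset R$, identify the $W(R')$-orbits in $cl(R)$, assign to each orbit $O$ the lattice-valued invariant $H_O := F(\alpha)$ via \Prop{propFalpha}(ii), and then translate (GR2), (GR3), and equation~(\ref{eqFalphabeta}) into constraints matching the combinatorial data in the statement.

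For (i), take $X \subset R$ to be lifts of a set of simple roots of the non-isotropic sub-GRRS $C_m \sqcup C_n \subset C(m,n)$; by \Lem{lemGRS1}(ii) together with \Thm{thmAGRS1}(i) one gets $R' \cong (C_m \sqcup C_n)^{(k)}$. The $W(R')$-orbits in $cl(R)$ are the non-isotropic orbits $O_1^L = \{\pm 2\vareps_i\}$, $O_1^S = \{\pm\vareps_i \pm \vareps_j\}$, $O_2^L = \{\pm 2\delta_j\}$, $O_2^S = \{\pm\delta_i \pm \delta_j\}$ and the single isotropic orbit $O_3 = \{\pm\vareps_i \pm \delta_j\}$, with $F$-values $H_1^L, H_1^S, H_2^L, H_2^S, H_3$. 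Applying (GR2) and (GR3) iteratively together with~(\ref{eqFalphabeta}), as in the proof of \Thm{thmAGRS1}, one obtains that $H_1^S = H_2^S =: L$ is a common free abelian subgroup of $\Ker(-,-)$ of rank $k$, while $H_1^L, H_2^L, H_3$ are unions of $2L$-cosets inside $L$. The essential new step is that (GR3) applied to pairs $(\vareps_1+\delta_1+a, \vareps_1-\delta_1+b)$ forces exactly one of $2\vareps_1+(a+b) \in R$ or $-2\delta_1+(b-a) \in R$; since $cl(R)$ is a WGRS but not a GRRS, this choice is genuine data, and reducing modulo $2L$ yields a subset $S \subset \mathbb{F}_2^k = L/2L$. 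The GRRS axioms on $R$ translate into exactly ``$S$ is proper and non-empty'' (non-empty from $H_1^L \neq \emptyset$, proper from $H_2^L \neq \emptyset$), giving the bijection $R \leftrightarrow R(S)$. The isomorphism analysis then follows \S\ref{isoRS}, and the $m = n$ Dynkin automorphism $\vareps_i \leftrightarrow \delta_i$ induces $R(S) \cong R(\mathbb{F}_2^k \setminus S)$ by swapping the two sides of the dichotomy.

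Case (ii) is analogous with $X$ taken from $B_m \sqcup C_n$: the two additional short-root orbits $\{\pm\vareps_i\}, \{\pm\delta_j\}$ pick up a second subset $S'$, governed by (GR2) on reflections $r_{\vareps_1}(2\vareps_1+z)$ and their parallels. Case (iii) combines (i) at $m = n = 1$ with Section~\ref{Ann}, because $cl(R) = C(1,1) = A(1,1)_f$: take $R' = A_1 \sqcup A_1$ generated by lifts of $\{2\vareps_1, 2\delta_1\}$, perform the same orbit analysis, and bifurcate. When the (GR3)-dichotomy on lifts of $\vareps_1 \pm \delta_1$ resolves uniformly across $L$-cosets (i.e.\ the choice is determined by a linear function), $R$ is a bijective quotient of $A(1,1)^{(k)}$, yielding $A(1,1)^{(k-1)}$ or a rational quotient $A(1,1)_x^{(k-1)}$ exactly as in \Prop{propAnnx}; the restricted range $0 < x < 1/2$ for $k = 1$ (resp.\ $x = 1/q$ for $k > 1$) comes from normalizing by the outer automorphism $\vareps_1 \leftrightarrow \delta_1$ of $A(1,1)$, combined with the equivalences $x \sim -x$ and $x \sim x+1$ from \Prop{propAnnx}(ii). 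Otherwise the dichotomy varies with the coset and one obtains $R \cong C(1,1)(S)$ as in case (i).

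The main obstacle is the combinatorial bookkeeping in (iii): separating the ``classical'' family $\{A(1,1)^{(k-1)}, A(1,1)_x^{(k-1)}\}$ from the ``combinatorial'' family $\{C(1,1)(S)\}$, and verifying that no two listed representatives are isomorphic. The distinguishing feature is whether the (GR3)-dichotomy on lifts of $\vareps_1 \pm \delta_1$ is determined by a global linear map $L \to \mathbb{F}_2$ (classical case, corresponding to $S$ being an affine linear subspace) or by a genuinely non-linear subset (combinatorial case). Verifying that this trichotomy is exhaustive, and that the listed representatives are pairwise non-isomorphic --- via invariants such as $\mathbb{Z}R \cap \Ker(-,-)$, the gap function, and the structure of $H_3$ modulo $2L$ --- will be the most delicate step of the proof.
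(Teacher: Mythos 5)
Your proposal follows the paper's proof in all essentials: the same finite subsystem $R'=C_m\sqcup C_n$ generated by lifts of simple roots, the same orbit-by-orbit analysis of $F(\alpha)$ via \Prop{propFalpha} and~(\ref{eqFalphabeta}), the same use of the (GR3) dichotomy on lifts of $\vareps_1\pm\delta_1$ to produce the complementary pair $F(2\vareps_1)$, $F(2\delta_1)$ encoded by a proper non-empty $S\subset\mathbb{F}_2^k$, and the same involution $\vareps_i\leftrightarrow\delta_i$ for $m=n$. Two specific points need correction, however.

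First, the claim that $R'\cong(C_m\sqcup C_n)^{(k)}$ ``by \Lem{lemGRS1}(ii) together with \Thm{thmAGRS1}(i)'' is false: $C_n$ is not among the cases of \Thm{thmAGRS1}(i) (it falls under (iv)/(v), where the classification is by subsets of $\mathbb{F}_2^k$), and in $R(S)$ the preimage of $\pm 2\vareps_i$ is $\pm 2\vareps_i+\iota_2^{-1}(S)$, not $\pm 2\vareps_i+L$. Indeed, if the preimage of $C_m\sqcup C_n$ were the full affinization, both branches of your (GR3) dichotomy would hold simultaneously, contradicting (GR3). The subsystem generated by $X$ is the finite $C_m\sqcup C_n$ (since $X$ spans a complement of the kernel), which is what the paper uses; your later statement that $H_1^L,H_2^L,H_3$ are unions of $2L$-cosets silently retracts the claim, but as written the argument is internally inconsistent.

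Second, in (iii) your proposed criterion separating the classical family from $C(1,1)(S)$ --- whether the dichotomy is ``determined by a global linear map $L\to\mathbb{F}_2$'', i.e.\ whether $S$ is an affine subspace --- is not the right invariant. For $A(1,1)_x$ with $x=p/q$ and $q>2$ one has $F(2e)=F(2d)=q\mathbb{Z}\delta'$ inside $\mathbb{Z}R\cap\Ker(-,-)=\mathbb{Z}\delta'$, which is not a union of cosets of $2\mathbb{Z}\delta'$ at all, so this $R$ is not of the form $C(1,1)(S)$ for any $S$ whatsoever. The paper's actual criterion is whether $2a\in L'$, where $a\in F(\vareps_1-\delta_1)$ and $L':=\mathbb{Z}(F(2\vareps_1)\cup F(2\delta_1))$: if $2a\notin L'$ then $F(2\vareps_1)=F(2\delta_1)=L'$ and one lands in the $A(1,1)$-family (split further by whether $a\in\mathbb{Q}L'$, exactly as in \Prop{propAnnx}); if $2a\in L'$ one gets $C(1,1)(S)$ with $S=\iota_2(F(2\vareps_1))$. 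You correctly flag this trichotomy as the delicate step, but the criterion you propose for it would not close the gap.
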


\subsubsection{Description of $R(S)$}\label{BCRS}
In order to describe the above  correspondences in (i)--(iii)
between GRRSs and subsets
in $\mathbb{F}_2^k$ we fix a  free abelian group
 $L\subset Ker (-,-)$  of rank $k$ and
 denote by $\iota_2$  the canonical map
$\iota_2: L\to L/2L\cong\mathbb{F}_2^k$ and by $\iota^{-1}_2$
the preimage of $S\subset \mathbb{F}_2^k$ in $L$.

If $S$ contains zero, then for $cl(R)=C(m,n)$ we take

$$R(S):=\{\pm\vareps_i\pm\vareps_j+L; \pm\delta_s\pm\delta_t+L;
\pm \vareps_i\pm\delta_j+L;
\pm 2\vareps_i+ \iota_2^{-1}(S); \pm 2\delta_s+(L\setminus\iota_2^{-1}(S))\}_{{1\leq i\not=j\leq m}
\atop{1\leq s\not=t\leq n}}.$$

For $BC(m,n)$ we construct $R(S,S')$ by adding to $R(S)$ the roots
$$\{\pm \vareps_i+\frac{1}{2}\iota_2^{-1}(S'); \pm \delta_s+
\frac{1}{2} \iota_2^{-1}(S')\}_{1\leq i\not=j\leq m, 1\leq s\not=t\leq n}.$$

For an arbitrary subset $S$, we take $R(S):=R(S-s)$ (resp., $R(S,S'):=R(S-s,S')$) for some $s\in S$
 the result does not depend on the choice of $s\in S$,
see~\S~\ref{choices}.

\subsubsection{Case $\dim Ker (-,-)=1$}
In this case~\Prop{caseBCmn} gives the following:

for $cl(R)=C(1,1)$, $R$ is either a finite GRRS $A(1,1)$
or $A(1,1)_x$ for $x\in\mathbb{Q}$, or $R(0)$ ($\cong A(1,1)_{1/2}$);

for $cl(R)=C(m,n)$, $R$ is $R(0)$ ($\cong A(2m-1,2n)^{(2)}$);

for $cl(R)=BC(m,n)$, $R$ is either $R(0,0)$ ($\cong A(2n,2m-1)^{(2)}$), or $R(0,1)$ ($\cong A(2m,2n-1)^{(2)}$),
or $R(0,\mathbb{F}_2)$ ($\cong A(2m,2n)^{(4)}$).
Note that $R(0,0)\cong R(1,0)$, $R(1,1)\cong R(0,1)$
and all these GRRSs are isomorphic if $m=n$.

\subsubsection{Isomorphisms}\label{Cmniso}
The conditions when $R(S), R(S')$ (resp., $R(S,S')$ and $R(S_1, S_1')$) are isomorphic can be proven similarly to~\S~\ref{isoRS}. For $m=n$ the involution
$\vareps_i\mapsto\delta_i$ gives rise to the isomorphism
$R(S)\cong R(\mathbb{F}_2^k\setminus S)$
(resp., $R(S, S')\cong R(\mathbb{F}_2^k\setminus S, S')$).

Remark that for $cl(R)=C(1,1)$ one has $A(1,1)_{1/2}\cong R(0)$.
However, in~\Prop{caseBCmn}
we consider only $A(1,1)_x$ for $0<x<1/2$, so
this isomorphism is not mentioned.

\subsection{Proof of~\Prop{caseBCmn}}
Let $X$ be a set of simple roots of $C_m\coprod C_n\subset C(m,n)\subset
 BC(m,n)$ (i.e., $X=:\{\vareps_1-\vareps_2,\ldots,\vareps_{m-1}-\vareps_m,
 2\vareps_m,\delta_1-\delta_2,\ldots,2\delta_n\}$);
 applying the procedure described in~\S~\ref{Falphastr},
 we get $R'=C_m\coprod C_n$. The $W(R')$ orbits in $cl(R)$ are
 the following: the set of isotropic roots, the set of long
 roots of $C_m$ (resp., of $C_n$),
  the set of short roots of $C_m$ (resp., of $C_n$), and
   for $BC(m,n)$, the set of short roots of $B_m$ (resp., of $B_n$).
   Recall that $F(\alpha)$ is the same for elements in the same orbit.

 Since  all isotropic
 roots form one $W(R')$-orbit, $F(-\alpha)=F(\alpha)$
 for each isotropic $\alpha$; since $F(-\alpha)=-F(\alpha)$, we get $F(\alpha)=-F(\alpha)$.

 We claim that
 \begin{equation}\label{MM}
 \begin{array}{l}
 \forall x,y\in F(\vareps_1-\delta_1)\ \text{ exactly one holds }
 x+y\in F(2\vareps_1)\ \text{ or } x-y\in F(2\delta_1),\\
 F(\vareps_1-\delta_1)+F(2\vareps_1),F(\vareps_1-\delta_1)+F(2\delta_1)
 \subset
 F(\vareps_1-\delta_1)
 \end{array}
 \end{equation}
 Indeed, for each $x,y\in F(\vareps_1-\delta_1)$
 one has $\vareps_1-\delta_1+x,
\vareps_1+\delta_1+y\in R$ so exactly one of two elements $2\vareps_1+x+y$ and
$2\delta_1+y-x$ lies in $R$ (see~\S~\ref{alphabeta}). This establishes the first formula.
The other formulae follow from~(\ref{eqFalphabeta}).

Set
$$L':=\mathbb{Z}(F(2\vareps_1)\cup F(2\delta_1)).$$
Take any $a\in F(\vareps_1-\delta_1)$. By~(\ref{MM}),
\begin{equation}\label{Fe1d1}
F(\vareps_1-\delta_1)=L'\pm a
\end{equation}
and, moreover, for each $b\in L'$ exactly one holds:
$b\in F(2\vareps_1)$ or $b+2a\in F(2\delta_1)$, and, similarly,
$b+2a\in F(2\vareps_1)$ or $b\in F(2\delta_1)$.
Therefore
\begin{equation}\label{C11eq}
L'=F(2\vareps_1)\coprod (F(2\delta_1)-2a)\cap L')=F(2\delta_1)\coprod (F(2\vareps_1)-2a)\cap L').
\end{equation}
Note that $0\in F(2\vareps_1), F(2\delta_1)$ gives $a\not\in L'$.

\subsubsection{Case $C(1,1)$}\label{C11}
If $2a\not\in L'$, then
$F(2\vareps_1)=F(2\delta_1)=L'$. If $a\not\in \mathbb{Q}L'$, then
$R\cong A(1,1)^{(k-1)}$, where $k=\dim \mathbb{Z}R\cap (-,-)$
(if $k=1$, then $L'=0$ and $R=A(1,1)$), otherwise
$R\cong A(1,1)_x^{(k-1)}$, see the proof of~\Prop{propAnnx}.
Notice that for $x=p/q$, $2q a\in L'$; we exclude
$q=2$, since $A(1,1)_{1/2}\cong R(0)$, see~\ref{BC11} below.

Consider the case $2a\in L'$.
Since~(\ref{C11eq}) holds for each $a\in F(\vareps_1-\delta_1)$, one has $F(2\delta_1)+2L'=F(2\delta_1)$
and $F(2\vareps_1)+2L'=F(2\vareps_1)$. Now taking
$S:=\iota_2(F(2\vareps_1))$ and the automorphism $\delta_i\mapsto
\delta_i-a$ we get $R\cong R(S)$ as required.

\subsubsection{Case $C(m,n)$ with $mn>1$}\label{Cmn3}
Since $C(m,n)\cong C(n,m)$ we can (and will) assume that $m\geq 2$. Using~(\ref{Fe1d1}) we get
$$F(\vareps_1-\vareps_2)=
F(\vareps_1-\delta_1)+F(\vareps_1-\delta_1)=L'\pm 2a.$$
Since $0\in F(\vareps_1-\vareps_2)$ we obtain
$2a\in L'$ and thus $R\cong R(S)$.

\subsubsection{Case $BC(m,n)$}\label{BC11}
The additional relations include
$$\begin{array}{l}
F(\vareps_1-\delta_1)+F(\delta_1)=F(\vareps_1),\ \ \ \
F(\vareps_1-\delta_1)+F(\vareps_1)=F(\delta_1),\\
F(\vareps_1-\delta_1)+2F(\delta_1),
F(\vareps_1-\delta_1)+2F(\vareps_1)\subset F(\vareps_1-\delta_1),\\
F(\vareps_1)+F(2\vareps_1)\subset F(\vareps_1),
4F(\vareps_1)+F(2\vareps_1)\subset F(2\vareps_1),
\end{array}$$
and similar relations between $F(\delta_1)$ and $F(2\delta_1)$.
In particular,
$$F(\vareps_1-\delta_1)+F(\vareps_1-\delta_1)+F(\vareps_1)=
F(\vareps_1)$$
(so $L'+F(\vareps_1)=F(\vareps_1)$),
and, since $F(\vareps_1-\delta_1)=L'\pm a$,
$2F(\vareps_1)\subset L'\cup (L'-2a)$. Moreover,
$4F(\vareps_1)\subset L'$.

Take $b\in F(\vareps_1)$
and observe that $b\pm 2a\in F(\vareps_1)$.
Since $2F(\vareps_1)\subset L'\cup (L'-2a)$
we get $4a\in L'$ (and $a\not\in L'$ by (\ref{C11eq})).

If $2a\in L'$, we obtain $2F(\vareps_1)\subset L'$
and taking
$S:=\iota_2(F(2\vareps_1))$,  $S':=\iota_2(2F(\vareps_1))$
and the automorphism $\delta_i\mapsto
\delta_i-a$ we get $R\cong R(S, S')$ as required.

Let $2a\not\in L'$ (and $2a\in \frac{1}{2}L'$).
Consider an automorphism $\psi: V\to V$ which maps $\delta_1$ to
$\delta_1+a$, and stabilizes
$\vareps_1$ and the elements of $Ker(-,-)$.
Note that $L'$ constructed for $\psi(R)$ is $L'\cup (L'+2a)$,
which is a free group of the same rank as $L'$; moreover,
$$F(\psi(\vareps_1+\delta_1))=L'\cup (L'+2a),\ \
F(\psi(2\vareps_1))L',\ \ F(\psi(2\delta_1))=(L'+2a),$$
and so $\psi(R)=R(\mathbb{F}_2^{k-1},S')$, see above.
This completes the proof of~\Prop{caseBCmn}.

\section{GRRS with finite $cl(R)$ and $\dim Ker(-,-)=1$}\label{sect7}
From  the above results, it follows that
the only finite GRRS with a degenerate form $(-,-)$ is $A(n,n)$
(the root system of $\mathfrak{gl}(n,n)$).
As a consequence, if $cl(R)$ is finite and
$R\not=A(n,n)$, then $R$ is affine.

Symmetrizable affine Kac-Moody superalgebras were classified
in~\cite{K2},\cite{vdL}.
Summarizing the above results in the special case when $R$ is
an affine GRRS and
$\dim Ker(-,-)=1$, we see that such GRRSs correspond to
the real roots of symmetrizable affine Kac-Moody superalgebras.
More precisely, except for the case when
$cl(R)$  is the root system of $\mathfrak{psl}(n,n)$,
$n\geq 2$, $R$ is the set of real roots
of some affine Kac-Moody superalgebra $\fg$, see below.
If $cl(R)$  is the root system of $\mathfrak{psl}(n,n)$,
$n\geq 2$, $R$ is a quotient of the set of real roots
of $\mathfrak{pgl}(n,n)^{(1)}$. Conversely: the set of real roots of any affine Kac-Moody superalgebra other than $\mathfrak{gl}(n,n)^{(1)}$ is an affine GRRS with $\dim Ker(-,-)=1$.

If $cl(R)$ is one of $A_n,D_n, E_6,E_7,E_8, A(m,n), m\not=n, C(n), D(m,n), D(2,1,a), F(4), G(3)$,
then  $\fg$ is  the corresponding non-twisted affine Kac-Moody superalgebra ($R=cl(R)^{(1)}$).

If $cl(R)$ is one of the GRRSs $B_n,C_n, F_4,G_2$ and $B(m,n)$ with $m,n\geq 1$,
then $\fg$ is either  the corresponding non-twisted affine Kac-Moody superalgebra or the twisted affine Lie superalgebra
$D_{n+1}^{(2)}, A_{2n-1}^{(2)}, E_6^{(2)}, D_4^{(3)}$ and $D(m+1,n)^{(2)}$ respectively.

If  $cl(R)$ is the non-reduced root system
 $BC_n=B(0,n)$ ($n\geq 1$), then $\fg$ can be
 $B(0,n)^{(1)}, A_{2n}^{(2)}, A(0,2n-1)^{(2)}, C(n+1)^{(2)}$ or $A(0,2n)^{(4)}$ (where $A(0,1)^{(2)}\cong C(2)^{(2)}$
 as $A(0,1)\cong C(2)$).

If  $cl(R)=BC(m,n)$ ($m,n\geq 1$), then
$\fg=A(2m,2n-1)^{(2)},\ A(2n,2m-1)^{(2)}$ or $A(2m,2n)^{(4)}$.
If  $cl(R)=C(m,n)$ with $mn>1$, then
$\fg=A(2m-1,2n-1)^{(2)}$.


\begin{thebibliography}{MMM}

\bibitem[GK]{GK} M.~Gorelik, V.~G.~Kac, {\em Characters of (relatively) integrable modules over affine Lie superalgebras}, Japan J. math. {\bf 10} (2015), 1--101.

\bibitem[H]{H} C.~Hoyt, {\em Regular Kac-Moody superalgebras
and integrable highest weight modules}, J. of Algebra {\bf 324}
(2010), no. 12, 3308-3354.

\bibitem[K1]{K1} V.~G.~Kac, {\em Lie superalgebras},  Adv.
    in Math., \textbf{26}, no.~1 (1977), 8--96.


\bibitem[K2]{K2} V.~G.~Kac, {\em Infinite-dimensional Lie algebras},
Third edition, Cambridge University Press, 1990.










\bibitem[S1]{VGRS} V.~Serganova, {\em On generalizations of root systems},
Comm. in Algebra {\bf 24 (13)}
(1996) 4281--4299.

\bibitem[S2]{S2} V.~Serganova, {\em Kac-Moody superalgebras and integrability}, in
Developments and trends in infinite-dimensional Lie theory, 169-218, Progr. Math., 288, Birkh\"auser Boston, Inc., Boston, MA, 2011.

\bibitem[Ser]{Ser} J.~P.~Serre, {Lie algebras and Lie groups},
Lectures given at Harvard Uniersity, W. A. Benjamin, Inc., New York-Amsterdam 1965.


\bibitem[Sh]{Sh} A.~Shaviv, {\em On the correspondence of affine
generalized root systems and symmetrizable affine Kac-Moody superalgebras}, M.Sc. Thesis, arXiv: 1507.07174.




\bibitem[vdL]{vdL} van de Leur, {\em A classification of contragredient Lie superalgebras of finite growth}, Comm. in Algebra {\bf 17} (1989), 1815-1841.


\bibitem[Yos]{Yos} Y.~Yoshii, {\em Locally extended affien root systems}, Contemporary Math. {\bf 506}, (2010), 285--302.


\bibitem[You]{You} M.~Yousofzadeh, {\em Extended affine root supersystems}, arXiv: 1502.03607.
\end{thebibliography}
\end{document}